\newtheorem{theorem}{Theorem}
\newtheorem{definition}[theorem]{Definition}
\newtheorem{lemma}[theorem]{Lemma}
\newtheorem{proposition}[theorem]{Proposition}
\newenvironment{proof}[1][Proof]{\textbf{#1.} }{\ \rule{0.5em}{0.5em}}
\begin{document}

\title{Periodic Patrols on the Line and Other Networks}
\author{Steve Alpern$^{1}$, Thomas Lidbetter$^{2}$ and Katerina Papadaki$%
^{3} $ \\
$^{1}$ORMS Group, Warwick Business School, \\University of Warwick, Coventry
CV4, UK\\
$^{2}$Department of Management Science and Information Systems, \\Rutgers Business School, NJ 07102, USA\\
$^{3}$Department of Mathematics, London School of Economics, \\London WC2A 2AE, UK}
\maketitle

\begin{abstract}
We consider a patrolling game on a graph recently introduced by Alpern et al. (2011) where the Patroller wins if he is at the attacked node while the attack is taking place. This paper studies the periodic patrolling game in the case that the attack duration is two periods. We show that if the Patroller's period is even, the game can be solved on any graph by finding the {\em fractional covering number} and {\em fractional independence number} of the graph. We also give a complete solution to the periodic patrolling game on
line graphs of
arbitrary size, extending the work of Papadaki et al. (2016) to the periodic domain. This models the patrolling problem on a border or channel, which is related to a classical problem of operational research going back to Morse and Kimball
(1951). A periodic patrol is required to start and end at the same location,
for example the place where the Patroller leaves his car to begin a foot patrol.
\end{abstract}

\section{Introduction}

The periodic patrolling game was introduced in Alpern et al. (2011)
to model the defense
of the nodes of a network from attack by an
antagonistic opponent. This is a discrete game model in which the network is
modeled as a graph, the Patroller chooses a walk on the graph with a given
period and the Attacker picks a node and a discrete time interval of fixed
duration $m$ for his attack. The Patroller wins the game if he is present at
the attacked node during the time interval in which it is attacked, in which
case we say that he intercepts the attack. Otherwise the Attacker wins.
Compared with other patrolling models in the literature, for example Chung
et al. (2011), the patrolling game model represents only
an idealization of the patrolling problem. However it is the only model in
which the Patroller and Attacker are treated symmetrically, rather than the
more usual Stackelberg approach where the Patroller picks his strategy first.

This paper considers the periodic
patrolling game on general graphs and then in more detail on the class of
line graphs $L_{n}$ consisting of $n$ nodes $1,2,\dots ,n$ with consecutive
numbers considered to be adjacent. The case of a unit attack duration $m=1$
is covered by the field of geometric games as defined by Ruckle (1983), so
we here consider the next smallest duration $m=2,$ which is the only case
thus far susceptible to analysis. We note that the easier version of
non-periodic patrolling games is able to handle line graphs for larger
values of $m$, as recently solved by Papadaki et al. (2016). It is likely that
the techniques introduced here will be extended to larger attack durations
in the future, but clearly additional ideas will be required.

In the case of the line graph, our discrete model could be applied for example to the problem of
patrolling, possibly with a sniffer dog, a bank of linearly arranged airport
security scanners, or a mountainous border with a discrete set of passes
that can be crossed. In such cases, the ``nodes'' can be attacked at any time,
around the clock, so the period $T$ is likely to be the number of nodes that
can be patrolled in a day. Other possiblities for defining $T$ might be the
attention span of the sniffer dog or the time between refueling by a mobile vehicle, robot or UAV.

The paper is organized as follows. In Section~\ref{sec:lit}, we review the related literature, then in Section~\ref{sec:def} we formally define the game. In Section~\ref{sec:gen_graphs} we discuss some results for general graphs, showing how the game can be solved using notions from fractional graph theory if the patrol period is even. We then give a complete solution to the game played on a line graph in Section~\ref{sec:line}. In Section~\ref{sec:multiple} we consider an extension of the game to the case of multiple patrollers, and show how our results on the line may be extended to this setting. Finally, we conclude in Section~\ref{sec:conclusions}.

\section{Literature review}
\label{sec:lit}

As stated in the abstract, the problem of patrolling a border or channel
against attack or infiltration goes back to the classical work of Morse and
Kimball (1951). Since then many attempts have been made to improve the theory
and practice of patrolling. Washburn (1982) considers an infiltrator who wants
to maximize the probability of getting across a line in a channel. The case
where the channel is blocked by fixed barriers has been consider by Baston and
Bostock (1987) and the case when the barriers are moving has been analyzed by
Washburn (2010). The case of a thick infiltrator has been considered by Baston
and Kikuta (2009). If there are many infiltrators and they arrive in a Poisson
manner, the analysis is given by Szechtman et al. (2008). Multiple infiltrators
are also considered by Zoroa et al. (2012) where the infiltration is through a
circular rather than a linear boundary. Multiple patrollers, when only some
portions of the boundary need to be protected, are considered by Collins et al.
(2013), who show how the problem can be divided up. Papadaki et al. (2016)
consider the discrete border patrol problem, where the infiltration can only
be accomplished at certain points of the border (perhaps mountain passes).
When patrollers are restricted to periodic patrols, as here, the analysis of
the continuous problem (with elements such as turning radius included) has
been analyzed by Chung et al. (2011).

The more general problem of patrolling an arbitrary network against attacks at
its nodes has been modeled as a game by Alpern et al. (2011), including a
definition of the periodic patrolling game which we adopt here. Lin et al.
(2013) developed more general approximate methods which cover such extensions
as varying values for attacks at different nodes. Their methods, extended in
Lin et al. (2014) to imperfect detection, can solve large scale problems. In
the computer science literature, patrolling games with mobile robots and a
Stackelberg model have been developed by Basilico et al. (2009, 2012). Multi
vehicle patrolling problems have been solved by Hochbaum et al. (2014).

Infiltration games without mobile patrollers are analyzed in Garnaev et al.
(1997), Alpern (1992), Baston and Garnaev (1996) and Baston and Kikuta (2004, 2009).

\section{Formal Definition of the (periodic) Patrolling Game}
\label{sec:def}

In this section we formally define the patrolling game. There are three
parameters: a graph $Q=Q(N,E)$ (where $N$ is the set of nodes and $E$ is the set of edges of $Q$), a period $T$, and an attack
duration $m$ (which we will take as $2$ in this paper). The Attacker
chooses a node $i$ of $Q$ to attack and a time interval of $m$
consecutive periods in which to attack it. These $m$ periods can be
considered as an arc of the time circle $\mathcal{T}=\left\{ 1,2,\dots
,T,T+1=1\right\}$, on which arithmetic is carried out modulo $T$. So in the
periodic game with $T=24$ and $m=5$, for example, a valid Attacker strategy
would be the \textquotedblleft overnight\textquotedblright\ attack, with attack interval
$J =\{22,23,24,1,2\}$. Note that if $Q$ has $n$ nodes, then the number of
possible attacks is given by $nT,$ and the mixed attack strategy which
chooses among them equiprobably will be called the \textit{uniform attack
strategy}. To foil the attack, the Patroller walks along the graph in an
attempt to intercept it, that is, to be at the attacked node at some time
during the attack interval. More precisely, a patrol is a walk $w$ on $Q$
with period $T$, that is, $w:\{1,2,\ldots\} \rightarrow N$ with $w(t)$ and $w(t+1)$ the same or adjacent nodes and $w\left( t+T\right) =w\left( T\right) $ for all $t$. A patrol $w$
intercepts an attack at node $i$ during attack interval $J$ if $i\in w\left(
J\right) $ or equivalently if $w\left( t\right) =i$ for some time $t$ in the
attack interval $J$. In such a case we say that the Patroller wins, and the
payoff is $1$; otherwise we say the Attacker wins, and the payoff is $0$.
Thus the payoff of the game corresponding to mixed strategies is the probability that the Patroller
intercepts the attack. The value $V$ of the game is the expected payoff (interception probability)
with optimal play on both sides.

We note that in Alpern et al. (2011), this game is called the periodic
patrolling game (one of two forms of the game considered there) and the
value is denoted $V^{p}$. We assume throughout that the period is at least
$2$ and that the graph $Q$ has at least $n=2$ nodes.

\section{General Graphs}
\label{sec:gen_graphs}

In this section we obtain some bounds on the value $V$ of the patrolling
game on a general graph. The tools comprise the well known covering and
independence numbers and a decomposition result taken from Alpern et al.
(2011).

\subsection{Covering and independence numbers $\mathcal{I}$ and $\mathcal{C}%
. $}
\label{sec:general_results}

We recall some elementary definitions about a graph $Q$. A set of nodes is
called {\em independent} if no two of them are adjacent. The maximum cardinality of an
independent set is called the independence number $\mathcal{I}$.
Similarly a set of edges is called a {\em covering set} if every node of the graph
is incident to one of these edges. The minimum cardinality of a covering set
is called the covering number $\mathcal{C}$ of the graph. It is well known that $\mathcal{I%
}\leq \mathcal{C}$.

Suppose the Attacker attacks in some fixed time interval $\{t,t+1\}$ at a
node chosen equiprobably from a set of $\mathcal{I}$ independent nodes. We call
this an {\em independent attack strategy}. If a
patrol intercepts one of these attacks at node $i\in \mathcal{I}$ at time $%
t, $ he cannot intercept another at time $t+1,$ since none of the other
attacks are at a node adjacent to $i$. Hence the probability of intercepting
an attack cannot exceed $1/\mathcal{I}$ and therefore $V\leq 1/\mathcal{I}$.
Next suppose $T$ is even. In this case the Patroller fixes a covering set
of $\mathcal{C}$ edges, picks a single edge amongst these randomly, and on
that edge goes back and forth in an oscillations of length $T$. We call this Patroller mixed strategy
an {\em unbiased covering strategy}, or, if the covering set is only an edge, an {\em unbiased oscillation}. Every node is visited by one of these
patrols in every pair of consecutive time periods, and hence every attack of
duration $m=2$ is intercepted by at least one of these $\mathcal{C}$ patrols.
Therefore the Patroller wins with probability at least $1/\mathcal{C}$. Hence we have
shown the following.

\begin{lemma}
The value of the Patrolling Game on any graph $Q$ satisfies%
\begin{align}
V &\leq 1/\mathcal{I}, \text {and futhermore}  \label{V<1/I} \\
1/\mathcal{C} &\leq V\leq 1/\mathcal{I},\text{ if $T$ is even.}
\label{1/C<V<1/I}
\end{align}
\label{lemma:cover-independence}
\end{lemma}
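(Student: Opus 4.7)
The plan is to establish the two bounds separately by exhibiting explicit mixed strategies for each player, since the discussion immediately preceding the lemma already identifies the right ones (the independent attack strategy for the Attacker, the unbiased covering strategy for the Patroller).

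For the upper bound $V \leq 1/\mathcal{I}$, I would fix a maximum independent set $I \subseteq N$ of size $\mathcal{I}$ and any time $t \in \mathcal{T}$, and have the Attacker play the mixed strategy that chooses $i \in I$ uniformly and attacks in the interval $\{t,t+1\}$. Against any pure periodic walk $w$, interception requires $w(s) = i$ for some $s \in \{t,t+1\}$. The two (possibly equal) nodes $w(t), w(t+1)$ are either identical or adjacent, so by independence of $I$ at most one of them lies in $I$. Hence at most one of the $\mathcal{I}$ equiprobable attacks is intercepted, giving expected payoff at most $1/\mathcal{I}$ against every pure (and hence every mixed) Patroller strategy. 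The minimax inequality then gives $V \leq 1/\mathcal{I}$.

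For the lower bound $1/\mathcal{C} \leq V$ when $T$ is even, I would fix a minimum edge cover $C$ with $|C| = \mathcal{C}$ and have the Patroller play the mixed strategy that picks an edge $e = uv \in C$ uniformly and follows the period-$2$ oscillation $u,v,u,v,\ldots$. Evenness of $T$ is exactly what makes this a legitimate period-$T$ walk. Against any attack at node $i$ in interval $\{t,t+1\}$, the covering property provides some edge $e = iv \in C$; the oscillation on $e$ visits both endpoints in every pair of consecutive time steps, so it visits $i$ at time $t$ or $t+1$ and intercepts. Thus at least one of the $\mathcal{C}$ pure patrols in the support intercepts every Attacker pure strategy, yielding interception probability at least $1/\mathcal{C}$, so $V \geq 1/\mathcal{C}$ by minimax.

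The only real subtlety — the obstacle, such as it is — is verifying that the unbiased covering strategy is a valid periodic patrol, which is precisely where the parity hypothesis enters: the length-$2$ oscillation wraps around correctly only when $2 \mid T$. Were $T$ odd, the oscillation $u,v,u,\ldots,?$ would end at the vertex opposite where it started, violating the periodicity condition $w(t+T) = w(t)$, and this is exactly why the lower bound in \eqref{1/C<V<1/I} is stated only in the even case.
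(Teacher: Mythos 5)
Your proof is correct and follows essentially the same route as the paper: the upper bound via the independent attack strategy (at most one of $w(t),w(t+1)$ can lie in an independent set) and the lower bound via the unbiased covering strategy (a period-$2$ oscillation on a randomly chosen covering edge, legitimate precisely because $T$ is even). Your explicit remark on why the parity hypothesis is needed for the wrap-around condition $w(t+T)=w(t)$ is a nice touch that the paper leaves implicit, but the substance is identical.
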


A graph is called bipartite if its nodes can be partitioned into two sets
such that no two nodes within the same set are adjacent. For bipartite
graphs, we can say more.

\begin{proposition}
\label{prop:cover-indep}Let $Q$ be a bipartite graph. Then $\mathcal{C}=\mathcal{I}$ and
the value $V$ satisfies%
\begin{eqnarray}
V &=&\frac{1}{\mathcal{C}}=\frac{1}{\mathcal{I}},\text{ if }T\text{ is even,
and}  \label{bipartite} \\
\left (\frac{2T-1}{2T}\right )\frac{1}{\mathcal{C}} = \left(\frac{2T-1}{2T} \right)\frac{1}{\mathcal{I}} &\leq &V\leq \frac{1}{\mathcal{I}}\text{ if }T\text{ is odd.%
}  \label{VToddT}
\end{eqnarray}
\end{proposition}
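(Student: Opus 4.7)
The plan is to handle the three assertions (the identity $\mathcal{C}=\mathcal{I}$, the even-$T$ value, and the odd-$T$ bounds) in order, with the bulk of the work being a combinatorial patrol construction for odd $T$.

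First, I would derive $\mathcal{C}=\mathcal{I}$ from classical results. Gallai's two identities give $\mathcal{I}+\tau=n$ and $\mu+\mathcal{C}=n$, where $\tau$ and $\mu$ are the vertex cover and matching numbers (the second identity requires no isolated vertex, which we may assume since otherwise $\mathcal{C}$ is undefined). K\"onig's theorem gives $\tau=\mu$ for bipartite graphs, so $\mathcal{I}=n-\tau=n-\mu=\mathcal{C}$. With this in hand, the even-$T$ case is immediate from Lemma~\ref{lemma:cover-independence}, which sandwiches $V$ between $1/\mathcal{C}$ and $1/\mathcal{I}$.

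For odd $T$ the upper bound $V\le 1/\mathcal{I}$ is again from Lemma~\ref{lemma:cover-independence}. For the lower bound $V\ge (2T-1)/(2T\mathcal{C})$, I would propose the following Patroller mixed strategy: draw an edge $e=\{a,b\}$ uniformly from a minimum edge cover of size $\mathcal{C}$, draw a ``sticky'' endpoint $x\in\{a,b\}$ with probability $1/2$ each, draw a shift $s\in\{1,\dots,T\}$ uniformly, and walk on $e$ alternating between $x$ and the other endpoint $y$ with a single repeated step $x\to x$ placed at time $s$ (this stuck step is forced because $T$ is odd). Counting over the $T$ consecutive-time windows $\{t,t+1\}$ in one period, all $T$ windows contain $x$ while exactly $T-1$ contain $y$; the one missing window is precisely the stuck one.

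Given any attack $(i,J)$, with probability at least $1/\mathcal{C}$ the chosen edge is incident to $i$; conditional on that, the attacker's node is the sticky endpoint with probability $1/2$ (intercepted always) and otherwise the non-sticky one (intercepted with probability $(T-1)/T$ over the uniform shift, since only one shift places the stuck window at $J$). The conditional intercept probability is $\tfrac12\cdot 1 + \tfrac12\cdot\tfrac{T-1}{T}=(2T-1)/(2T)$, giving the desired lower bound. The main obstacle will be the careful bookkeeping for the odd-period stuck walk---in particular, confirming that the $1/2$-mixture over which endpoint is sticky exactly compensates the single-window loss on the non-sticky endpoint---after which bipartiteness enters only through $\mathcal{C}=\mathcal{I}$.
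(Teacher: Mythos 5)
Your proposal is correct and follows essentially the same route as the paper: $\mathcal{C}=\mathcal{I}$ via K\"onig's theorem (the paper cites it directly, you spell out the Gallai identities), the even case from Lemma~\ref{lemma:cover-independence}, and for odd $T$ the identical ``biased covering strategy'' (uniform edge from a minimum cover, uniformly random sticky endpoint and stuck time), yielding the same conditional interception probability $1-1/(2T)=(2T-1)/(2T)$. Your case split between the sticky and non-sticky endpoint is in fact a slightly cleaner account of why only one of the $2T$ (endpoint, shift) choices fails for a given attack.
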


\begin{proof}
The first result (\ref{bipartite}) follows immediately from (\ref{1/C<V<1/I}%
) and the well known fact (Konig's Theorem) that $\mathcal{C}=\mathcal{I}$
for bipartite graphs. The upper bound of (\ref{VToddT}) follows from (\ref%
{V<1/I}). For the lower bound let $\{e_{k}\}_{k=1}^{\mathcal{C}}$ be a covering
set of $\mathcal{C}$ edges, and let $w_{k}$ denote the randomized walk of
period $T$ which oscillates on $e_{k}$ except that it stays at a randomly chosen node
of $e_{k}$ for two consecutive times, also randomly chosen. We call this strategy of the Patroller a {\em biased
covering strategy}. For example if $T=7$
and the endpoints of $e_{k}$ are $a$ and $b$, the repeated sequence might be $%
ababbab$. Consider the Patroller strategy that chooses one of the randomized walks $%
w_{k}$ equiprobably. If one of the nodes of $e_{k}$ is attacked then the
attack is detected if the Patroller chooses $w_{k}$ (which happens with
probabiliy $1/\mathcal{C}$) and the Patroller does not happen to choose to
repeat this node for two consecutive periods that coincide with the time of
attack (this happens with probability $1-1/(2T)$. So the total probabilty
the attack is detected is $\left( 1/\mathcal{C}\right) \left( 1-1/\left(
2T\right) \right) $, giving the lower bound for the value in (\ref{VToddT}).
\end{proof}

We now give an example based on Lemma \ref{lemma:cover-independence} and Proposition \ref{prop:cover-indep} for the line graph $L_7$ with nodes $\{1,\ldots,7\}$ and edges $(i,i+1)$ $i=1,\ldots,6$. Since $L_n$ is bipartite we can use the result in (\ref{bipartite}). We demonstrate the result for even period $T=12$ (any even period would suffice but we pick $12$ to be able to compare it with a later example in Section~\ref{sec:non-periodic}). A minimum covering set is $\{(1,2), (3,4), (5,6), (6,7)\}$ and thus $\mathcal{C} = 4$. An unbiased covering strategy for the Patroller consists of picking an edge at random from a minimum covering set (with probability $1/4$) and performing an oscillation on that edge with period $T=12$. Since $T=12$ is even the oscillations performed on the chosen edges are unbiased (nodes are visited equally often). This is demonstrated in Figure \ref{fig:example_periodic}. This Patroller strategy intercepts attacks at nodes $1-5,7$ with probability $1/4$ and at node $6$ with probability $1/2$. Thus, the Patroller at worst can guarantee interception probability of at least $1/4$. The Attacker would use the independent attack strategy and attack equiprobably on the independent set $\{1,3,5,7\}$, which clearly guarantees him interception probability of at most $1/4$. This gives the value of the game $V=1/C=1/4$.

\begin{figure}[H]
	\begin{center}
		\includegraphics[scale=0.3]{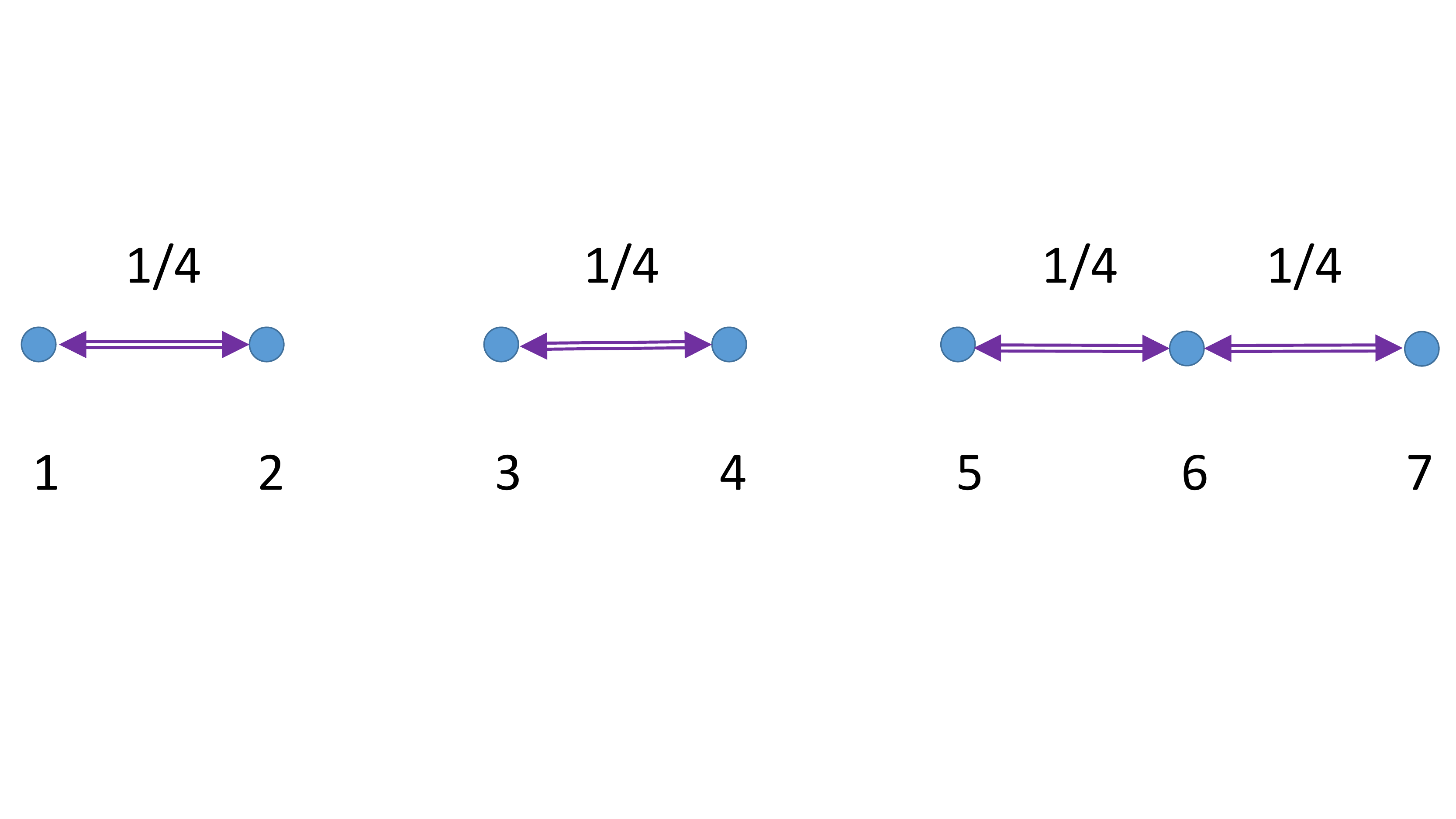}
		\caption{Unbiased covering strategy for the Patroller to oscillate on edges $\{(1,2), (3,4), (5,6), (6,7)\}$ of this minimum covering set.}
		\label{fig:example_periodic}
	\end{center}
\end{figure}

The following gives an alternative upper bound to $1/\mathcal{I}$ on $V$
based on the \textit{uniform attack strategy}, which chooses equiprobably
among the $nT$ possible attacks (pure strategies). The reason that there are
$nT$ pure strategies is because in a game with period $T$, there are $T$ periods
that the attacker can start the attack: $1,2,\ldots,T,T+1=1$, at each node.
The new upper bound is sometimes but not always better (lower) than $1/\mathcal{I}$.

\begin{proposition}
Suppose the Attacker adopts the uniform strategy on a graph $Q$. Then no Patroller
pure strategy $w(t)$ can intercept more than $2T$ of the Attacker's pure strategies,
and no more than $2T-1$ of them if $T$ is odd and $Q$ is bipartite. So,
$$ V \leq \frac{2}{n} \text{    and    } V \leq \frac{2T-1}{nT} \text{    if $T$ odd, $Q$ bipartite.} $$
\label{prop:uniform_attacker}
\end{proposition}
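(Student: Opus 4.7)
The plan is to prove, for an arbitrary pure Patroller strategy $w$, the stated per-walk interception bound; the value estimate then follows at once, since against the uniform Attacker distribution any mixed Patroller strategy earns at most the maximum, over its support, of the fraction of attacks intercepted, and this maximum is bounded by $2T/(nT)=2/n$ (respectively $(2T-1)/(nT)$).

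First, I would set up the counting. An Attacker pure strategy is a pair $(i,t)$ with $i\in N$ and $t\in\{1,\dots,T\}$ the starting time of the attack interval $J=\{t,t+1\}$, giving $nT$ pure strategies. By the interception rule, $w$ intercepts $(i,t)$ iff $i\in\{w(t),w(t+1)\}$. Hence the total number of attacks intercepted by $w$ is
\begin{equation*}
\sum_{t=1}^{T}|\{w(t),w(t+1)\}|,
\end{equation*}
where the arithmetic in the index is mod $T$. Each summand is either $1$ (if $w(t)=w(t+1)$, i.e.\ the Patroller waits) or $2$ (if $w(t)$ and $w(t+1)$ are distinct adjacent nodes). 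This gives the crude upper bound of $2T$, which is the first assertion.

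Second, for the refinement when $T$ is odd and $Q$ is bipartite, I would use a parity argument. Let $(A,B)$ be the bipartition and define $p(t)=0$ if $w(t)\in A$ and $p(t)=1$ if $w(t)\in B$. Every edge of $Q$ crosses the bipartition, so $w(t)\neq w(t+1)$ forces $p(t)\neq p(t+1)$, while $w(t)=w(t+1)$ forces $p(t)=p(t+1)$. The periodicity $w(T+1)=w(1)$ yields $p(T+1)=p(1)$, so the number of indices $t\in\{1,\dots,T\}$ with $w(t)\neq w(t+1)$ must be even. Since $T$ is odd, there exists at least one $t^{\ast}$ with $w(t^{\ast})=w(t^{\ast}+1)$, contributing only $1$ rather than $2$ to the sum. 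Hence the number of intercepted attacks is at most $2(T-1)+1=2T-1$, as claimed.

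The main obstacle is essentially the parity step: one must observe that in a bipartite graph the only way for a closed walk of odd length to exist is via at least one waiting step, and translate this into the loss of exactly one unit in the interception count. The rest is routine counting plus the minimax inequality that converts the per-pure-strategy bound into an upper bound on $V$.
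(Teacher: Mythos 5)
Your proof is correct and follows essentially the same route as the paper's: count the intercepted attacks interval by interval (each interval contributing $|\{w(t),w(t+1)\}|\le 2$), and use the bipartite parity of a closed walk of odd length $T$ to force at least one waiting step, losing one unit from the count. You are in fact slightly more explicit than the paper, which organizes the count as two interceptions per time period and simply asserts, without the parity argument, that $w(t)=w(t+1)$ for some $t$.
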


\begin{proof}
If $w(t)=i$ and $w(t+1)=j \neq i$ then in these two periods $w$ can intercept at most four pure Attacker strategies, namely $[i,(t-1,t)]$, $[i,(t,t+1)]$ and $[j,(t,t+1))]$, $[j,(t+1,t+2))]$, so 2 in each period and $2T$ in all. If $i=j$ then only the three attacks $[i,(t-1,t)]$, $[i,(t,t+1)]$ and $[i,(t+1,t+2)]$ can be intercepted. But if $T$ is odd and $Q$ is bipartite then $w(t) = w(t+1)$ for some $t$, so at most $2T-1$ attacks can be intercepted. Since there are $nT$ possible attacks, we have $V \leq \frac{2T}{nT}=\frac{2}{n}$ and $V \leq \frac{2T-1}{nT}$ if $T$ is odd and $Q$ is bipartite.
\end{proof}

Note that it follows from the proof of Proposition~\ref{prop:uniform_attacker} that against the
uniform attack strategy, the interception probability will be strictly less
than $2/n$ for any Patroller walk which repeats a node. This observation can be used to show that in some
cases oscillations on an edge cannot be optimal. Consider the triangle graph
shown in Figure~\ref{fig:triangle}, with $T=3.$ If the Patroller adopts a random cyclic
patrol, he intercepts any attack with probability $2/3.$ Similarly,
Proposition \ref{prop:uniform_attacker} shows that the uniform attack strategy is intercepted by any
walk with probability not exceeding 2/3, and so $V=2/3.$ On the other hand,
if the Patroller uses oscillations on edges (or any walks other than the
cycles), then he has repeated vertices and by the above remark cannot
achieve interception probability $2/3.$ So this example shows that in
general, the Patroller cannot restrict to walks restricted to individual
edges.

\begin{figure}[ht]
	\begin{center}
		\includegraphics[scale=0.3]{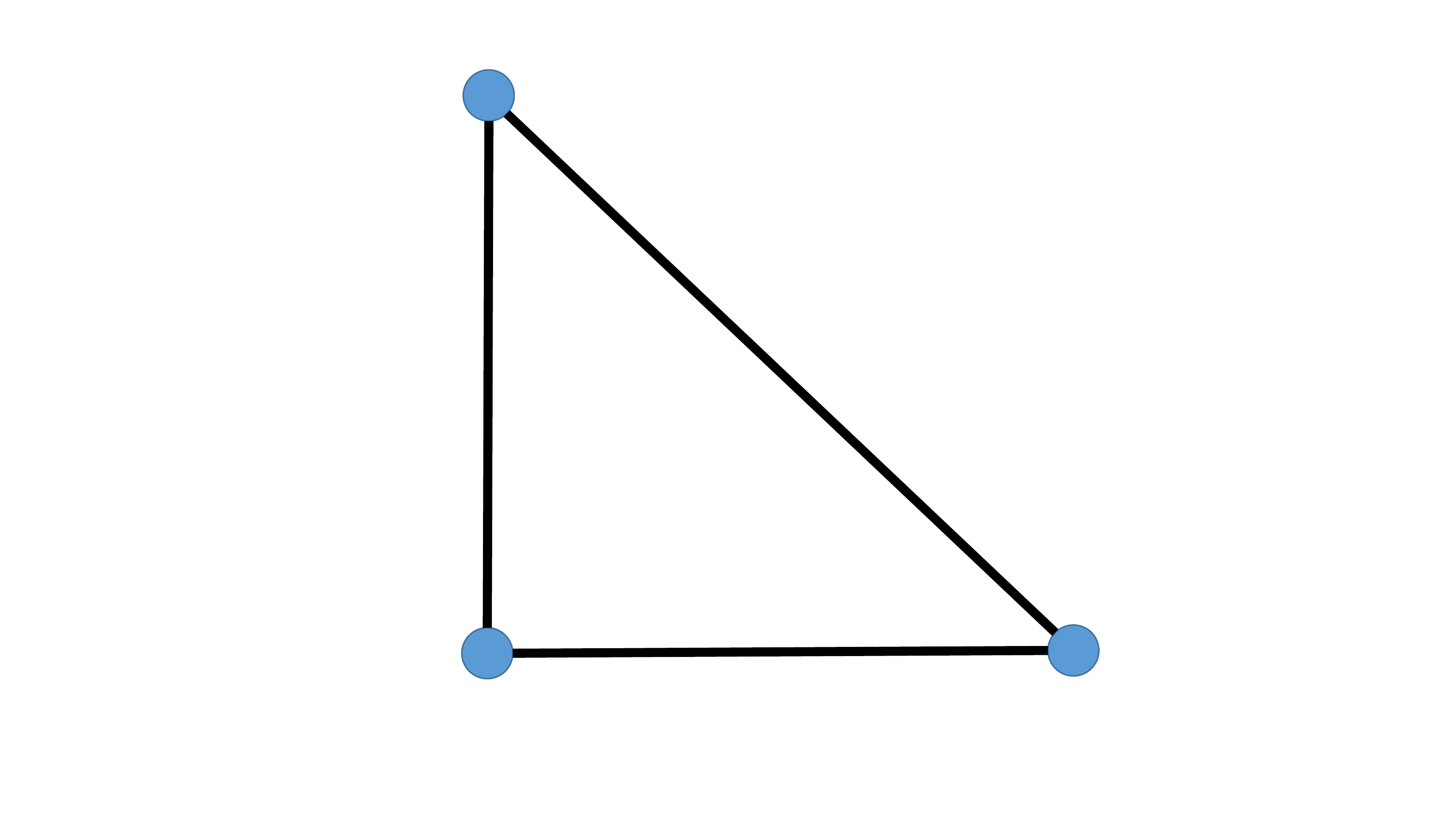}
		\caption{The triangle graph}
		\label{fig:triangle}
	\end{center}
\end{figure}

The following situation will be important in analyzing the patrolling game
on the $n$ node line graph $L_{n}$ with $n$ even. For example, consider the edge
covering of $L_4$ consisting of the edges $(1,2)$ and $(3,4)$ with $\mathcal{C} = 2 = n/2$. The covering
edges are disjoint, unlike the graph of Figure~\ref{fig:example_periodic}.

\begin{proposition}
Suppose $T$ is odd, $n$ is even and let $Q$ be a bipartite graph with $\mathcal{C}=n/2$. Then
$$V=\left( 2T-1\right) /\left(nT\right).$$
\label{prop:general_T_odd_n_even}
\end{proposition}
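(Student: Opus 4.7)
The plan is to combine the upper bound for bipartite graphs from Proposition~\ref{prop:uniform_attacker} with the lower bound for odd $T$ from Proposition~\ref{prop:cover-indep}, and to observe that under the hypothesis $\mathcal{C} = n/2$ these two bounds coincide exactly, pinning down $V$.

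First I would note that by K\"onig's theorem, since $Q$ is bipartite, $\mathcal{I} = \mathcal{C} = n/2$. Moreover, a covering set of only $n/2$ edges which covers all $n$ nodes must consist of pairwise disjoint edges, i.e., it is a perfect matching of $Q$. This structural observation is useful conceptually, since it tells us that the biased covering strategy in the proof of Proposition~\ref{prop:cover-indep} has a clean form: each of the $n/2$ edges is chosen equiprobably, and conditional on a given edge being chosen the Patroller performs a biased oscillation on that edge alone, with every node belonging to exactly one such edge.

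For the upper bound, I would apply Proposition~\ref{prop:uniform_attacker} directly: since $T$ is odd and $Q$ is bipartite, against the uniform attack strategy any Patroller pure walk intercepts at most $2T - 1$ of the $nT$ possible attacks, so
\begin{equation*}
V \leq \frac{2T-1}{nT}.
\end{equation*}
For the lower bound, I would invoke the biased covering strategy constructed in Proposition~\ref{prop:cover-indep}. That strategy guarantees interception probability at least $(1/\mathcal{C})(1 - 1/(2T))$ against any attack. Substituting $\mathcal{C} = n/2$ gives
\begin{equation*}
V \geq \frac{2}{n} \cdot \frac{2T-1}{2T} = \frac{2T-1}{nT},
\end{equation*}
matching the upper bound.

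There is no real obstacle: both bounds are already in hand from the preceding propositions, and the content of the statement is just the recognition that the hypothesis $\mathcal{C} = n/2$ (equivalently, the existence of a perfect matching of covering edges in a bipartite graph with $\mathcal{I} = n/2$) forces the two bounds to coincide. The only thing worth emphasizing carefully is that the biased covering strategy is well-defined in this setting because the covering edges are disjoint, so no node is ``double-counted'' across edges in the mixed strategy, which is what makes the bound $(1/\mathcal{C})(1 - 1/(2T))$ valid uniformly for every attacked node.
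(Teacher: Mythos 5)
Your proof is correct and follows exactly the paper's argument: the lower bound comes from the biased covering strategy of Proposition~\ref{prop:cover-indep} with $\mathcal{C}=n/2$, and the upper bound from Proposition~\ref{prop:uniform_attacker} for odd $T$ and bipartite $Q$. The observation that the cover is a perfect matching is a pleasant aside but is not actually needed for the bound $(1/\mathcal{C})(1-1/(2T))$ to hold, since a node incident to more than one covering edge would only be intercepted with higher probability.
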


\begin{proof}
Since $\mathcal{C}=n/2,$ we have from~(\ref{VToddT}) that
\[
V\geq \frac{\left( 2T-1\right) }{2\mathcal{C}T}=\frac{2 \left(
2T-1\right) }{2nT}=\frac{2T-1}{nT}
\]
The result follows since for odd $T$ we have from Proposition~\ref{prop:uniform_attacker} that
$V\leq \left( 2T-1\right) /\left( nT\right) $.
\end{proof}

\subsection{Even Periods $T$}

When the period $T$ is even, we can solve the patrolling game on any graph
$Q=Q\left(  N,E\right)  $ (where $N$ is the set of nodes and $E$ is the set of edges of $Q$) by extending the notions of covering and
independence numbers to fractional forms. A more explicit solution for even
$T$ will be obtained later for line graphs.

Let $\mu:E\rightarrow\left[  0,1\right]  $ assign \textit{edge weights}
$\mu\left(  e\right)  $ to every edge $e$ so that the total weight $\hat{\mu}=%
{\textstyle\sum\nolimits_{e\in E}}
\mu\left(  e\right)  $ is minimized subject to the condition that for every
node $i\in N$ the weights $\mu\left(  e\right)  $ of the edges $e$ incident to
$i$ sum to at least $1.$ Such a $\mu$ is called an \textit{optimal edge
weighting} and $\hat{\mu}$ is called the \textit{fractional covering number}.

Similarly let $\nu:N\rightarrow\left[  0,1\right]  $ assign node weights
$\nu\left(  i\right)  $ to every node $i$ so that the total weight $\hat{\nu}=%
{\textstyle\sum\nolimits_{i}}
\nu\left(  i\right)$ is maximized subject to the condition that sum of the
weights $\nu\left(  i\right)$ of the two endpoints $i$ of every edge
$e$ is at most 1. Such a $\nu$ is called an \textit{optimal node weighting}
and $\hat{\nu}$ is called the \textit{fractional independence number}. It is
well known that $\hat{\mu}=\hat{\nu}$, a result that follows from either
duality theory or the minimax theorem applied to the game where the maximizer
picks an edge, the minimizer picks a node and the payoff is $1$ if the node is
incident to the edge and $0$ otherwise. Note that, since the number of strategies in this game is polynomial in the number of nodes of the graph, an optimal edge weighting, an optimal node weighting and $\hat{\mu}=\hat{\nu}$ can be found efficiently.
\begin{theorem}
If $T$ is even, then the value of the patrolling game is given by%
\[
V=1/\hat{\mu}=1/\hat{\nu}.
\]
An optimal strategy for the Patroller is to oscillate on edge $e$ with
probability $\mu\left(  e\right)  /\hat{\mu},$ where $\mu$ is any optimal edge
weighting. An optimal strategy for Attacker to fix any interval $\left\{
t,t+1\right\}  $ and attack at node $i$ with probabiltiy $\nu\left(  i\right)
/\hat{\nu},$ where $\nu$ is an optimal node weighting.
\end{theorem}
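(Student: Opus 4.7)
The plan is to show $V \geq 1/\hat{\mu}$ by exhibiting the announced Patroller strategy, and $V \leq 1/\hat{\nu}$ by exhibiting the announced Attacker strategy, and then invoking LP duality (the stated equality $\hat{\mu}=\hat{\nu}$) to collapse both bounds to a single value. This follows the same structural pattern as Lemma~\ref{lemma:cover-independence} and Proposition~\ref{prop:cover-indep}, with integer edge covers replaced by fractional ones and integer independent sets replaced by fractional ones; the parity hypothesis on $T$ will be used in exactly one place, to make oscillations unbiased.

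For the lower bound $V \geq 1/\hat{\mu}$, I would have the Patroller select edge $e$ with probability $\mu(e)/\hat{\mu}$ and then perform an oscillation of period $T$ on $e$. Since $T$ is even, the two endpoints of $e$ are each visited in every window of two consecutive times, so if the attack is at node $i$ during $\{t,t+1\}$ then the attack is intercepted whenever the chosen edge is incident to $i$. Summing over such edges, the interception probability is
\[
\sum_{e \ni i} \frac{\mu(e)}{\hat{\mu}} \; \geq \; \frac{1}{\hat{\mu}},
\]
where the inequality is the defining constraint of the fractional covering LP.

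For the upper bound $V \leq 1/\hat{\nu}$, I would have the Attacker fix the time window $\{t_0,t_0+1\}$ once and for all (the value of $t_0$ is irrelevant by periodicity) and attack node $i$ with probability $\nu(i)/\hat{\nu}$. Against any pure Patroller walk $w$, the only attacks intercepted are those at nodes $\{w(t_0),w(t_0+1)\}$, so the interception probability is
\[
\frac{\nu(w(t_0)) + \nu(w(t_0+1)) \cdot \mathbf{1}[w(t_0)\neq w(t_0+1)]}{\hat{\nu}}.
\]
If $w(t_0)=w(t_0+1)$ this is at most $1/\hat{\nu}$ since $\nu\le 1$; if $w(t_0) \neq w(t_0+1)$ then these two nodes form an edge, so by the fractional independence constraint their $\nu$-values sum to at most $1$, again giving at most $1/\hat{\nu}$. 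By linearity this upper bound also holds for mixed Patroller strategies.

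Combining the two bounds and using $\hat{\mu}=\hat{\nu}$ gives $V=1/\hat{\mu}=1/\hat{\nu}$, and the two strategies displayed above are then certified optimal. I do not expect any serious obstacle: the only subtle point is recognising that the evenness of $T$ is exactly what guarantees that an oscillation on any edge visits both endpoints in \emph{every} consecutive pair of times (not merely on average), which is what lets a single edge defend both endpoints against an $m=2$ attack; this is the same phenomenon already used in Lemma~\ref{lemma:cover-independence} for the integer case.
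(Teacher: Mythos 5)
Your proof is correct and follows essentially the same route as the paper: the Patroller's fractional-cover mixture of unbiased oscillations gives $V\geq 1/\hat{\mu}$ via the covering constraint, the Attacker's fractional-independence mixture over a fixed two-period window gives $V\leq 1/\hat{\nu}$ via the edge constraint (with the $w(t_0)=w(t_0+1)$ case handled by $\nu\leq 1$), and LP duality closes the gap. Your remark pinpointing where the evenness of $T$ is used is exactly the right observation and matches the paper's reasoning.
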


\begin{proof}
Suppose the Patroller chooses the stated mixed strategy and the attack is at
node $i,$ in any time interval. The Patroller will intercept the attack if he
has chosen to oscillate on an interval incident to $i,$ which has probability
at least $1/\hat{\mu}$ because the numerater is the sum of weights on edges
incident to $i.$ Similarly, suppose the Attacker adopts the stated mixed
strategy. Let $i$ and $j$ be the nodes occupied by the Patroller at the attack
times $t$ and $t+1.$ If $i\neq j,$ and $e=\left\{  i,j\right\}  $ is the edge
determined by $i\neq j$ then the probability of intercepting the attack is
given by $\nu\left(  i\right)  /\hat{\nu}+\nu\left(  j\right)  /\hat{\nu
}=\left(  \nu\left(  i\right)  +\nu\left(  j\right)  \right)  /\hat{\nu}%
\leq1/\hat{\nu}.$ If $i=j$ the same inequality holds.
\end{proof}

Note that if we restrict the weights $\mu(e)$ and $\nu(i)$ to being $0$ or $1$ we get the usual covering number $\hat{\mu} = \mathcal{C}$ and independence number $\hat{\nu} = \mathcal{I}$. Thus, from linear programming theory and duality we have:  $\mathcal{I} \leq \hat{\nu} = \hat{\mu} \leq \mathcal{C}.$

\begin{figure}[ht]
	\begin{center}
		\includegraphics[scale=0.5]{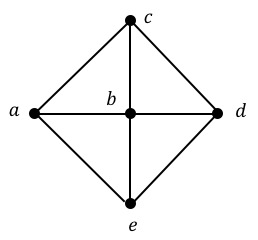}
		\caption{A non-bipartite graph}
		\label{fig:fractional-ex}
	\end{center}
\end{figure}

We consider, as an example, the graph depicted in Figure~\ref{fig:fractional-ex}. It is not bipartite, so the covering number and independence number are not equal. The covering number is $3$, and an optimal covering is $\{ ab,ac,de\}$ (where, for example $ab$ denotes the edge with endpoints $a$ and $b$). The independence number of the graph is $2$, and a maximum cardinality independent set is $\{ a, d\}$.

One optimal edge weighting is $\mu\left(  ae\right)  =1,\mu\left(  bc\right)  =\mu\left(
cd\right)  =\mu\left(  db\right)  =1/2$ and an optimal node weighting is given
by $\nu\left(  a\right)  =$ $\nu\left(  b\right)  =\nu\left(  c\right)
=\nu\left(  d\right)  =\nu\left(  e\right) = 1/2 .$ Hence $\hat{\mu} = \hat{\nu} = 5/2$. This translates to an optimal Patroller strategy that oscillates on $ae$ with probability $\mu(ae)/ \hat{\mu} = 2/5$, and oscillates on $bc$, $cd$ or $bd$ each with probability $\mu(bc)/\hat{\mu} = 1/5$. And it translates to an optimal Attacker strategy of attacking at node $i$ with probability $\nu(i)/\hat{\nu} = 1/5$, which is equivalent to the uniform Attacker strategy. We have $V=1/\hat{\mu} = 1/\hat{\nu} = 1/(5/2) = 2/5$.

\subsection{Patroller decomposition}
\label{sec:decomp}

As observed earlier in Alpern {\em et al.} (2011) the Patroller has the option of decomposing the given graph $Q$ into subgraphs $Q_{1}$ and $Q_{2}$ and randomly choosing whether to play an optimal patrolling strategy on $Q_{1}$ or on $Q_{2}$. Specifically, suppose we write the node set $N$ of $Q$ as the
(not necessarily disjoint) union $N_{1}\cup N_{2},$ and define $Q_{i}$ to be
the graph with nodes $N_{i}$ and edges between nodes that are adjacent in $Q$. Let $V_{i}$ denote the value of the patrolling game on $Q_{i}$ (with the
same parameters as on $Q$). If the Patroller optimally patrols on $Q_{i}$
with probability $p_{i},$ then any attack on a node in $Q_{i}$ will be
intercepted with probability at least $p_{i}V_{i}$. If the Patroller
equalizes these two probabilities ($p_1 V_1 = p_2V_2$) by choosing $p_{1}=V_{2}/\left(
V_{1}+V_{2}\right)$, then he wins with probability at least%
\begin{align}
p_{2}V_{2} &=p_{1}V_{1}=\frac{V_{1}V_{2}}{V_{1}+V_{2}},\text{ and hence we
have } \nonumber\\
V &\geq \frac{V_{1}V_{2}}{V_{1}+V_{2}}.  \label{eq:decomp}
\end{align}

The right-hand side of (\ref{eq:decomp}) represents the highest interception probability that
the Patroller can obtain by restricting patrols to one of the two subgraphs $%
Q_{1}$ or $Q_{2}.$ So if strict inequality holds in (\ref{eq:decomp}) then it is
suboptimal for the Patroller to decompose $Q$ in this way. If (\ref{eq:decomp}) holds with
equality, we say that the patrolling game on $Q$ with period $T$ is \textit{%
decomposable}. Note that if the game for $Q,T$ is decomposable this means
that removing edges (or barring the Patroller from using them) connecting
nodes in $Q_{1}$ to nodes in $Q_{2}$ does not lower the value of the game.

This derivation is simpler than that given in Alpern et al. (2011). We will use this
method to solve one of the cases for the line graph in Section~\ref{sec:decomposed}.

Consider the example in Figure \ref{fig:fractional-ex}. Take $N_{1}=\left\{ a,e\right\} $ and $N_{2}=\left\{ b,c,d\right\} .$ We
have $V_{1}=1$ (an oscillation intercepts any attack at $a$ or $e)$ and $%
V_{2}=2/3,$ as shown in the analysis of the triangle graph in Figure~\ref{fig:triangle}. Using the
decomposition result (5), we have
\begin{eqnarray*}
V &\geq &\frac{V_{1}~V_{2}}{V_{1}+V_{2}}=\frac{2/3}{1+2/3}=\frac{2}{5}\,%
\text{and Proposition 3 gives} \\
V &\leq &\frac{2}{n}=\frac{2}{5},\text{ so }V=2/5,\text{  }
\end{eqnarray*}%
as shown earlier in the analysis of Figure~\ref{fig:fractional-ex}, using different methods.

\section{The Line Graph}
\label{sec:line}

We now concentrate our attention on the line graph $L_{n}$ with node set $%
N=\left\{ 1,2,\dots ,n\right\} $ and edges between consecutive numbers. This graph is bipartite, with the two node sets made up of the odd
numbers and the even numbers. As mentioned in Proposition~\ref{prop:cover-indep}, this implies
that $\mathcal{I}=\mathcal{C},$ and we may take the odd numbered nodes as a
maximum independent set, giving
\begin{equation}
\mathcal{I}=\mathcal{C=}\left\{
\begin{array}{cc}
\frac{n}{2}, & \text{if }n\text{ is even, and} \\
& \\
\frac{n+1}{2}, & \text{if }n\text{ is odd.}%
\end{array}%
\right.   \label{eq:C value}
\end{equation}%
The solution of the periodic patrolling game on the line
breaks up into five cases, as outlined in Table \ref{table:summary of results}. For the Attacker
the strategies are simpler and have been defined earlier. However, for the Patroller
the strategies are more complicated and specific details for some of them can be found
at the corresponding propositions.

\begin{table}[H]
{\small
\begin{tabular}{|c|c|c|c|c|}
\hline
\text{Case} & \text{Description} & \text{Value} & \text{Patroller strategy}
& \text{Attacker strategy} \\ \hline
1 & \text{$T,n$ even} & $\frac{2}{n}$ & \text{unbiased covering strategy} & \text{%
independent} \\
& Proposition~\ref{prop:cases1-3} &  &  Lemma~\ref{lemma:cover-independence}&  Lemma~\ref{lemma:cover-independence}\\ \hline
2 & \text{$T$ even, $n$ odd} & $\frac{2}{n+1}$ & \text{unbiased covering strategy} &
\text{independent} \\
& Propostion~\ref{prop:cases1-3} &  &  Lemma~\ref{lemma:cover-independence} &  Lemma~\ref{lemma:cover-independence}\\ \hline
3 & \text{$T$ odd, $n$ even} & $\frac{2T-1}{nT}$ & biased covering strategy & \text{uniform} \\
&  Proposition~\ref{prop:cases1-3} &  & Proposition~\ref{prop:cover-indep} &  Proposition~\ref{prop:uniform_attacker} \\ \hline
4 & \text{ $T,n$ odd, }$n \geq 2T-1$ & $\frac{2T-1}{nT}$ & mixture of
$p$-biased oscillations &
uniform \\
&  Propositions~\ref{prop:case_4},~\ref{prop:case_4-decomp}&  & (Prop~\ref{prop:case_4}) or decomposed (Prop~\ref{prop:case_4-decomp}) & Prop~\ref{Discrete Line value u.b.},~Fig~\ref{fig:graph} \\ \hline
5 & \text{$T,n$ odd, }$n \leq 2T-1$ & $\frac{2}{n+1}$ & mixture of $p$-biased oscillations & independent \\
&  Proposition~\ref{prop:case_5}&  & Proposition~\ref{prop:case_5} &Prop~\ref{Discrete Line value u.b.},~Fig~\ref{fig:graph}  \\ \hline
\end{tabular}%
} \\
\caption{Solution of Patrolling Game on $L_n$, period $T$.}
\label{table:summary of results}
\end{table}

We give below in Figure \ref{fig:matrix_cases} a partition of $(n,T)$ into the five cases of Table \ref{table:summary of results}. The pattern is quite complicated.

\begin{figure}[H]
	\begin{center}
		\includegraphics[scale=0.3]{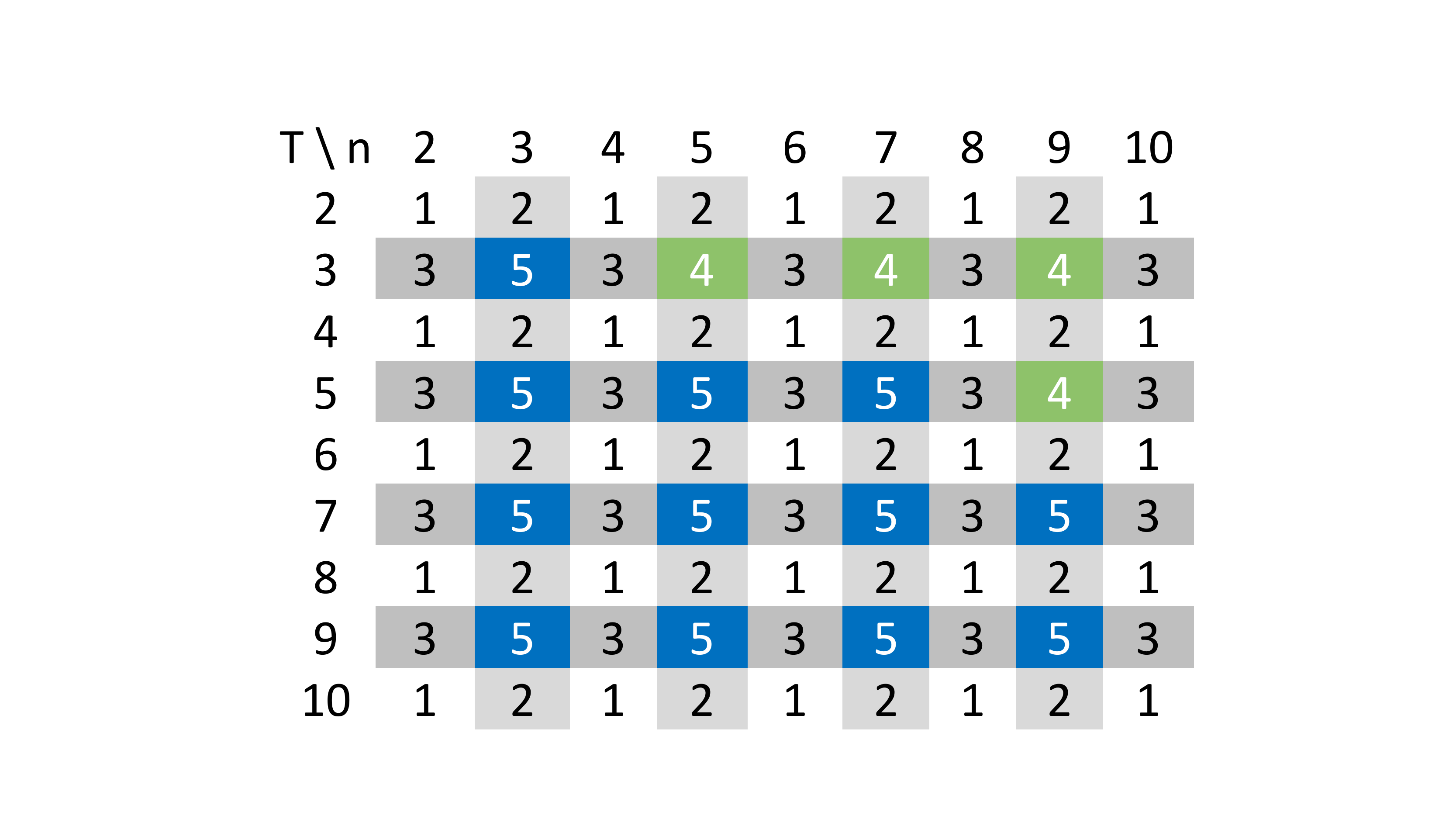}
		\caption{Cases from Table \ref{table:summary of results} for pairs of $(n,T)$.}
		\label{fig:matrix_cases}
	\end{center}
\end{figure}

\subsection{Cases 1 to 3 (one of $T$ or $n$ is even)}

If either $T$ or $n$ is even, there are three different forms for the value,
but all follow easily from previous results.

\begin{proposition}
\label{prop:cases1-3} For $L_n$, if $T$ is even, then
\begin{equation}
V=\frac{1}{\mathcal{C}}=\left\{
\begin{array}{cc}
\frac{2}{n} & \text{if }n\text{ is even,} \\
& \\
\frac{2}{n+1} & \text{if }n\text{ is odd.}%
\end{array}%
\right.  \label{eq:value_T_even}
\end{equation}%
If $T$ is odd and $n$ is even we have
\begin{equation}
V=\frac{2T-1}{nT}.
\label{eq:Toddneven}
\end{equation}
\end{proposition}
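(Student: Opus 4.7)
The plan is to obtain all three formulas as direct corollaries of results already established in Section~\ref{sec:gen_graphs}. The only ingredients needed are (i) the fact that $L_n$ is bipartite, with odd- and even-numbered nodes forming the two sides of the bipartition, and (ii) the explicit value of $\mathcal{C}$ given in~(\ref{eq:C value}).

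For Cases~1 and~2 (both with $T$ even), I would invoke equation~(\ref{bipartite}) of Proposition~\ref{prop:cover-indep} to conclude $V = 1/\mathcal{C}$, and then substitute $\mathcal{C} = n/2$ or $\mathcal{C} = (n+1)/2$ according to the parity of $n$, yielding $V = 2/n$ or $V = 2/(n+1)$ respectively. An optimal Patroller strategy is the unbiased covering strategy from Lemma~\ref{lemma:cover-independence} (pick an edge uniformly at random from a minimum covering set and oscillate on it with period $T$), and an optimal Attacker strategy is the independent attack strategy on the set of odd-numbered nodes.

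For Case~3 ($T$ odd, $n$ even), the equality $\mathcal{C} = n/2$ from~(\ref{eq:C value}) means the hypotheses of Proposition~\ref{prop:general_T_odd_n_even} are satisfied verbatim, so the stated value $V = (2T-1)/(nT)$ follows immediately, with the Patroller using the biased covering strategy of Proposition~\ref{prop:cover-indep} and the Attacker using the uniform strategy of Proposition~\ref{prop:uniform_attacker}.

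The main ``obstacle'' here is therefore only organizational: there is no new combinatorial content, since the proposition is in effect a compact table of specializations of Propositions~\ref{prop:cover-indep} and~\ref{prop:general_T_odd_n_even} to the line graph. One small point to watch is simply that the formula for $\mathcal{C}$ in~(\ref{eq:C value}) splits on the parity of $n$, so the two subcases of $T$ even must be recorded separately. The genuinely difficult cases, in which both $n$ and $T$ are odd, are deferred to the later propositions of Section~\ref{sec:line}.
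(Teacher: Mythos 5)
Your proposal is correct and follows essentially the same route as the paper: the $T$ even cases are read off from Proposition~\ref{prop:cover-indep} together with~(\ref{eq:C value}), and the $T$ odd, $n$ even case is obtained by applying Proposition~\ref{prop:general_T_odd_n_even} after noting $\mathcal{C}=n/2$ (the paper exhibits the disjoint covering $\{2i-1,2i\}$ explicitly, but this is the same verification). No gaps.
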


\begin{proof}

First suppose that $T$ is even. In this case, the result~(\ref{eq:value_T_even}) easily follows from Proposition~\ref{prop:cover-indep} and (\ref{eq:C value}), since $L_n$ is bipartite.

For $T$ odd and $n$ even, there is an edge covering of $L_{n}$ with $\mathcal{C}$ $=n/2$ disjoint
edges of the form $\left\{ 2i-1,2i\right\} ,$ $i=1,\dots ,n/2$. Thus the result follows from Proposition~\ref{prop:general_T_odd_n_even}.
\end{proof}

Thus the only remaining cases (4 and 5) are when $T$ and $n$ are both odd. These are the complicated cases.

\subsection{Comparison of uniform and independent attack strategies}

For the remaining cases when $T$ and $n$ are both odd, we must compare the effectiveness of two
different strategies for the Attacker: the uniform strategy, mentioned
above, chooses equiprobably among all the $nT$ possible pure stategies (at
all $n$ nodes at all $T$ starting times); the independent strategy starts at
time, say, $1$ and chooses equiprobably among the $\mathcal{I}$ independent
nodes. That is, the independent strategy chooses among $\mathcal{I}$ {\em simultaneous} attacks.
We have already obtained two different upper bounds on $V$ for these
cases: $\left( 2T-1\right) /\left( nT\right) $ from Proposition ~\ref{prop:uniform_attacker}, for the uniform attack strategy; and $2/\left(n+1\right) $ from~(\ref{VToddT}), for the independent strategy (since $\mathcal{I}=\frac{n+1}{2}$). In
general, neither of these is the better (lower) bound, as can be seen in
Figure~\ref{fig:graph}.

\begin{figure}[H]
	\begin{center}
		\includegraphics[scale=0.3]{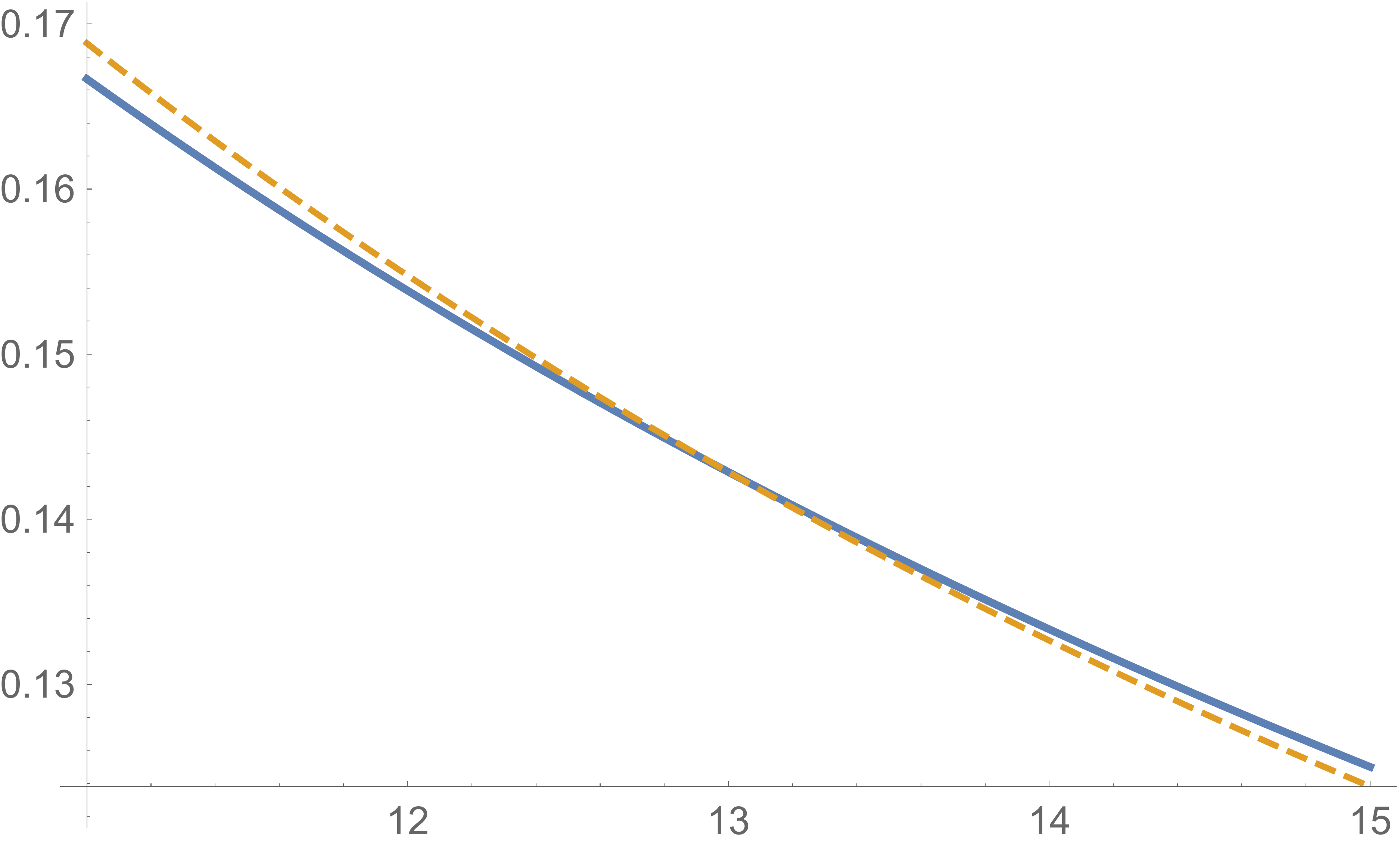}
		\caption{Plots of $2/(n+1)$ (solid) and $(2T-1)/(nT)$ (dashed), $T=7$, $n=11,\ldots,15$.}
		\label{fig:graph}
	\end{center}
\end{figure}

Note that the two curves intersect at $n=2T-1$ (at $n=13$ in the figure). Since the Attacker can choose the attack (uniform or independent) which gives the smaller upper bound on the value, we can summarize his options as follows.

\begin{proposition}
\label{Discrete Line value u.b.} Suppose $T$ and $n$ are both odd, and $%
Q=L_{n}$. Then%
\begin{equation*}
V\leq \min \left( \frac{2T-1}{nT},\frac{2}{n+1}\right) =\left\{
\begin{array}{cc}
\frac{2}{n+1} & \text{if }n\leq 2T-1, \\
& \\
\frac{2T-1}{nT} & \text{if }n\geq 2T-1.%
\end{array}%
\right.
\end{equation*}
\end{proposition}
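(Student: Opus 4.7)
The plan is essentially to assemble two bounds that have already been established and then carry out an elementary comparison to determine which of them is smaller in each regime.

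First, I would recall that the upper bound $V \leq (2T-1)/(nT)$ follows directly from Proposition~\ref{prop:uniform_attacker} applied to the uniform attack strategy, since $L_n$ is bipartite and $T$ is odd. Second, I would recall that $V \leq 2/(n+1)$ follows from the upper bound $V \leq 1/\mathcal{I}$ of Lemma~\ref{lemma:cover-independence} (equivalently, from the upper bound in (\ref{VToddT})), combined with the formula $\mathcal{I} = (n+1)/2$ from (\ref{eq:C value}) that holds when $n$ is odd. Since the Attacker is free to pick whichever of these two attack strategies yields the smaller value, we immediately get $V \leq \min\bigl((2T-1)/(nT),\, 2/(n+1)\bigr)$.

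The remaining work is to identify which of the two quantities is smaller. Cross-multiplying (all terms being positive), the inequality
\[
\frac{2T-1}{nT} \leq \frac{2}{n+1}
\]
is equivalent to $(2T-1)(n+1) \leq 2nT$, which simplifies to $2T - n - 1 \leq 0$, i.e.\ $n \geq 2T-1$. Thus $(2T-1)/(nT)$ is the smaller bound precisely when $n \geq 2T-1$, while $2/(n+1)$ is the smaller bound when $n \leq 2T-1$, and the two coincide at $n = 2T-1$. This yields the piecewise expression in the statement.

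There is no real obstacle here: both bounds come ``for free'' from earlier propositions, and the comparison is a one-line algebraic manipulation. The only mild subtlety is being explicit that both the uniform and independent attack strategies are legitimate choices for the Attacker, so that the minimum of their two upper bounds is itself a valid upper bound on $V$.
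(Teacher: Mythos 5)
Your proof is correct and follows essentially the same route as the paper, which likewise obtains the proposition by combining the bound $V\leq (2T-1)/(nT)$ from Proposition~\ref{prop:uniform_attacker} with the bound $V\leq 2/(n+1)$ from the independent attack strategy (via $\mathcal{I}=(n+1)/2$), and notes that the two expressions cross at $n=2T-1$. Your explicit cross-multiplication verifying which bound is smaller in each regime is exactly the intended (and correct) elementary comparison.
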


We now analyze these two cases for $n$ separately, beginning with $n\leq
2T-1 $. For the Patroller strategies we shall use oscillations which are similar to the walks $w_{k}$
which appeared in the proof of Proposition~\ref{prop:cover-indep}.

\subsection{Case 4 (\text{$T,n$ odd, }$n \geq 2T-1$)}
\label{sec:case4}

To deal with the case of $n \geq 2T-1 $ and noting the the oddness of $T$
requires a stunted type of oscillation, we define {\em $p$-biased oscillations} as follows.

\begin{definition}
For $p \in [0,1]$, a {\bf right $p$-biased oscillation} $\overrightarrow{b}_p(i)$ (for $%
i=1,\ldots ,n-1$) is a $T$-periodic walk between $i$ and $i+1$ where $i$ and
$i+1$ alternate except that with probability $p$, at a random time, the right-hand node $i+1$
is repeated (if $T=2q+1$, it is at node $i+1$ for $q+1$ periods and at $i$
for $q$ periods); with probability $1-p$, at a random time, the left-hand node is repeated. For convenience, we define a {\bf left $p$-biased  oscillation} $\overleftarrow{%
b}_p(i)$ as $\overrightarrow{b}_{1-p}(i)$. If $p=1/2$, we will refer to a right (or left) $p$-biased oscillation as an {\bf unbiased oscillation}.
\end{definition}

For the following result note that for larger $n$ the uniform attack strategy is better for the Attacker than the independent attack strategy.

\begin{proposition}
For $L_n$, assume that both $T$ and $n$ are odd and that $2T \le n+1$. Then
\begin{equation*}
V=\frac{2T-1}{nT}.
\end{equation*}%
The uniform attack strategy is optimal for the Attacker and a probabilistic choice of biased oscillations is optimal for the Patroller.
\label{prop:case_4}
\end{proposition}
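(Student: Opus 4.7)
The upper bound $V \leq (2T-1)/(nT)$ is immediate from Proposition~\ref{Discrete Line value u.b.} in the regime $n \geq 2T-1$, via the uniform attack strategy. The remaining work is to exhibit a mixed Patroller strategy that intercepts every pure Attacker strategy with probability at least $A := (2T-1)/(nT)$.

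I would look for such a strategy within the class of mixtures of $p$-biased oscillations: the Patroller plays $\overrightarrow{b}_{p_i}(i)$ on edge $\{i,i+1\}$ with probability $x_i \ge 0$, where $\sum_{i=1}^{n-1} x_i = 1$. A short computation (using the uniform randomization of the doubling position across the $T$ time slots) shows that $\overrightarrow{b}_p(i)$ intercepts any attack at node $i+1$ with probability $1-(1-p)/T$ and any attack at node $i$ with probability $1 - p/T$, independently of the attack's starting time. Consequently the interception probability of the mixture at node $j$ is time-invariant and equal to $x_{j-1}(1-(1-p_{j-1})/T) + x_j(1-p_j/T)$ (with the conventions $x_0 = x_n = 0$). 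Setting all $n$ of these expressions equal to $A$ and summing yields the consistency identity $\sum_i x_i(2T-1)/T = nA = (2T-1)/T$, which collapses to $\sum x_i = 1$; hence the system contains $n-1$ independent equations in the $2(n-1)$ unknowns $(x_i,p_i)$, leaving an $(n-1)$-parameter family of candidate solutions.

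The main obstacle is to exhibit a solution within this family that also satisfies $x_i \ge 0$ and $p_i \in [0,1]$; this is precisely where the hypothesis $n \ge 2T-1$ is essential. Writing the partial sums $S_i = \sum_{k \le i} x_k$ and their normalized deviations $\epsilon_i = n(S_i - i/n)$, the feasibility constraints reduce to $\epsilon_0 = 0$, $\epsilon_{n-1} = 1$, $\epsilon_i \in [0,1]$, together with the step-ratio condition $\epsilon_j/(1-\epsilon_{j-1}) \in [(T-1)/T, T/(T-1)]$. I would first treat the tight boundary case $n = 2T-1$ by taking $p_i$ alternately equal to $0$ and $1$ (fully left- and right-biased oscillations) with explicit weights $x_{2k-1} = A(T-k)/(T-1)$ and $x_{2k} = kA/(T-1)$ for $k = 1, \ldots, T-1$. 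A direct check verifies that each step-ratio is saturated and that the interception probability at every node equals $A$. For $n > 2T-1$ the extra length provides slack in the ratio constraints, and the construction is extended by distributing additional mass on central edges using intermediate bias parameters $p_i \in (0,1)$ (alternatively, one can use the decomposition technique exploited in Proposition~\ref{prop:case_4-decomp}). Once this explicit feasible solution is in hand, the interception probability at every node in every time window equals $A$, matching the upper bound and establishing $V = (2T-1)/(nT)$ with the uniform attack optimal.
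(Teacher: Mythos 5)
Your argument is correct in substance but follows a genuinely different route from the paper's. The paper proves the lower bound directly for all odd $n\geq 2T-1$ at once: it mixes uniformly over the $(n+1)/2$ edge sets $D_j=A_j\cup B_j$ (each consisting of $(n-1)/2$ disjoint edges covering all but one node), uses a \emph{single} common bias parameter $p$ on every oscillation, and chooses $p=(2T+n-1)/(2n)$ to equalize the interception probabilities at even and odd nodes; the hypothesis $2T\leq n+1$ enters only as the condition $p\leq 1$. You instead set up the general system of node equations for an arbitrary mixture of $p_i$-biased oscillations and then solve it explicitly only in the boundary case $n=2T-1$ --- where, incidentally, your weights $x_{2k-1}=A(T-k)/(T-1)$, $x_{2k}=kA/(T-1)$ with $p$ alternating between $0$ and $1$ are exactly the marginal edge distribution of the paper's strategy in that case (there $p=(2T+n-1)/(2n)=1$). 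Your per-node interception formulas $1-(1-p)/T$ and $1-p/T$ agree with the paper's (\ref{eq:forward}) and (\ref{eq:backward}), and your boundary-case solution checks out node by node. For $n>2T-1$, however, your direct extension (``distributing additional mass on central edges using intermediate bias parameters'') is only a sketch, with no explicit weights or biases and no feasibility verification; as written that branch is a gap. Your decomposition fallback does close it rigorously and without circularity: $L_n$ splits into $L_{2T-1}$, whose value $1/T$ follows from your explicit construction, and the even line $L_{n-(2T-1)}$, whose value $\frac{2T-1}{(n-2T+1)T}$ is Proposition~\ref{prop:cases1-3}; the decomposition bound (\ref{eq:decomp}) then yields exactly $\frac{2T-1}{nT}$, which is in effect the paper's own alternative proof, Proposition~\ref{prop:case_4-decomp}. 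So your proof is complete provided you lean on the decomposition for $n>2T-1$; what the paper's single-parameter construction buys is a uniform, non-decomposed strategy covering the whole range in one stroke.
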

The reader is invited to read the example in Table \ref{tab:L7-case4} and commentary to obtain some intuition for the proof.\\
\begin{proof}
From Proposition \ref{Discrete Line value u.b.} we know that $V\leq \frac{2T-1}{nT}$, so it is enough to demonstrate a Patroller strategy
which intercepts an attack at any node $i$ with probability at least $\frac{2T-1}{nT}$.

For $j=1,\ldots,(n+1)/2$, let $A_j$ be the set of edges of the form $(2i-1,2i)$ for $i < j$. For example, $A_1$ is empty and $A_3 = \{(1,2), (3,4)\}$. Also let $B_j$ be the set of edges of the form $(2i,2i+1)$ for $i \ge j$, so $B_1 = \{(2,3),(4,5),\ldots,(n-1,n)\}$ and $B_3 = \{(6,7),(8,9),\ldots,(n-1,n)\}$. Finally let $D_j = A_j \cup B_j$.

For example when $n=7$ we have $A_{2}=\left\{ (1,2)\right\},$ $B_{2}=\left\{ (4,5), (6,7)\right\}$ and
$D_{2}=\left\{(1,2), (4,5), (6,7)\right\},$ as shown by the three arrows (for edges) on the second line from the top in Table~\ref{tab:L7-case4}. The arrows are oriented left for edges in $A_{2}$ and
right for those in $B_{2}$ to indicate the Patroller's use of left or right
biased oscillations on these edges in his optimal strategy.

There are $(n-1)/2$ edges in $D_j$, and each node in the line graph except one is incident to some edge in $D_j$, for each $j$.

Consider the following Patroller strategy. First some $j$ is chosen uniformly at random,\\ $j=1,\ldots, (n+1)/2$ and an edge $(i,i+1)$ in $D_j$ is chosen uniformly at random. If $(i,i+1)$ is contained in $A_j$ then the Patroller performs a left $p$-biased oscillation $\overleftarrow{b}_p(i)$. If $(i,i+1)$ is in $B_j$ then the Patroller performs a right $p$-biased oscillation $\overrightarrow{b}_{p}(i)$. This probability $p$ will be determined later.

If a node is either on the left of an edge in some $A_j$ that is being patrolled or if it is on the right of an edge in some $B_j$ that is being patrolled, then an attack at that node is intercepted with probability:
\begin{equation}
p \cdot 1 + (1-p) \cdot(T-1)/T = (T+p-1)/T.
\label{eq:forward}
\end{equation}
If a node is either on the {\em right} of an edge in some $A_j$ that is being patrolled or if it is on the {\em left} of an edge in some $B_j$ that is being patrolled, then an attack at that node is intercepted with probability:
\begin{equation}
p \cdot (T-1)/T + (1-p) \cdot 1 = (T-p)/T.
\label{eq:backward}
\end{equation}
We first calculate the probability $p_{2i}$ that an attack at an even numbered node $2i$ is intercepted, $i=1,\ldots, (n-1)/2$. Observe that for every one of the $(n+1)/2$ values of $j$, the node $2i$ is either on the right of an edge in $A_j$ or on the left of an edge in $B_j$, so
\begin{align}
p_{2i} = \left( \frac{1}{(n-1)/2} \right)  \left(\frac{T-p}{T} \right) = \frac{2(T-p)}{(n-1)T}. \label{eq:p2i}
\end{align}
For an odd numbered node $2i-1,i=1,\ldots,(n+1)/2$, we observe that there are $(n-1)/2$ values of $j$ such that the node $2i-1$ is either on the left of an edge in $A_j$ or on the right of an edge in $B_j$. There is one value of $j$ such that node $2i-1$ is not incident to any edge in $A_j$ or $B_j$. So the probability $p_{2i-1}$ that an attack at node $2i-1$ is intercepted is
\begin{align}
p_{2i-1} = \left( \frac{1}{(n-1)/2} \right) \left( \frac{(n-1)/2}{(n+1)/2} \right) \left(\frac{T+p-1}{T} \right) = \frac{2(T+p-1)}{(n+1)T}. \label{eq:p2i+1}
\end{align}
Since $2T \le n+1$, we may choose $p=(2T+n-1)/(2n)$ so that the probabilities $p_{2i}$ and $p_{2i-1}$ are equal, and substituting this value of $p$ into~(\ref{eq:p2i}) or~(\ref{eq:p2i+1}), we obtain the bound
\[
V \ge \frac{2(T-(2T+n-1)/2n)}{(n-1)T} = \frac{(2T-1)}{nT}.
\]
Combining this with our lower bound, this establishes the proposition.
\end{proof}

We illustrate the Patroller's optimal strategy, taking $L_7$ as an example, with $T=3$ in Table \ref{tab:L7-case4}. The four choices of $D_1,\ldots,D_4$ correspond to the four rows in Table \ref{tab:L7-case4}. The left pointing arrows correspond to the edges in the $A_j$ and the right pointing arrows correspond to the edges in the $B_j$. Nodes which are incident to one of the edges in $D_j$, are indicated by a solid disk, those which are not, by an outlined disk.

\begin{table}[htb!]
\begin{gather*}
\begin{tabular}{llllllllllllll}
1 &  & 2 &  & 3 &  & 4 &  & 5 &  & 6 &  & 7 & \\
$\circ $ &  & $\bullet $ & $\Longrightarrow $ & $\bullet $ &  & $\bullet $ &
$\Longrightarrow $ & $\bullet $ &  & $\bullet $ & $\Longrightarrow $ & $%
\bullet $ & $D_1$\\
$\bullet $ & $\Longleftarrow $ & $\bullet $ &  & $\circ $ &  & $\bullet $ & $%
\Longrightarrow $ & $\bullet $ &  & $\bullet $ & $\Longrightarrow $ & $%
\bullet $ & $D_2$\\
$\bullet $ & $\Longleftarrow $ & $\bullet $ &  & $\bullet $ & $%
\Longleftarrow $ & $\bullet $ &  & $\circ $ &  & $\bullet $ & $%
\Longrightarrow $ & $\bullet $ & $D_3$\\
$\bullet $ & $\Longleftarrow $ & $\bullet $ &  & $\bullet $ & $%
\Longleftarrow $ & $\bullet $ &  & $\bullet $ & $\Longleftarrow $ & $\bullet
$ &  & $\circ $ & $D_4$\\
\end{tabular}
\end{gather*}%
	\caption{Optimal strategy for $L_{7}$ with $T=3$.}
	\label{tab:L7-case4}
\end{table}

The Patroller picks one of the rows of the table at random, and then one of the arrows in that row at random, corresponding to an edge $(i,i+1)$.  Equivalently, he picks one of the $12$ arrows at random. Then he performs a left or right $p$-biased oscillation, depending on the direction of the arrow, where $p=(2T+n-1)/(2n)=12/14=6/7$.  If a node has three arrows pointing toward it (odd nodes), then an attack at that node is intercepted with probability $(3/12)(p+(1-p)(T-1)/T) = (1/4)(6/7+(1/7)(2/3))=5/21$. If, on the other hand, a node has four arrows pointing away from it (even nodes), then an attack at that node is intercepted with probability $(4/12)((1-p)+p(T-1)/T) = (1/3)(1/7+(6/7)(2/3)) = 5/21$. So the value is $5/21 = (2T-1)/(nT)$.





\subsection{Case 5 (\text{$T,n$ odd,}$n \leq 2T-1$)}

We now consider the remaining open case of $n$ and $T$ odd and $n \leq 2T-1$.

\begin{proposition}
		\label{prop:case_5}
	For $L_n$, assume that both $T$ and $n$ are odd and that $n \leq 2T-1$. Then
	\begin{equation*}
	V=\frac{2}{n+1}.
	\end{equation*}%
	The independent strategy is optimal for the Attacker and a probabilistic choice of biased oscillations is optimal for the Patroller.
\label{prop:case5_Patroller}
\end{proposition}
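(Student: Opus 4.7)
The plan is to mirror the construction used in the proof of Proposition~\ref{prop:case_4}, keeping the same outer randomization over the sets $D_j = A_j \cup B_j$, but choosing a different value of the bias parameter $p$. In Case~4 we set $p=(2T+n-1)/(2n)$ so as to equalize the interception probabilities at odd and even nodes; however, this $p$ exceeds $1$ as soon as $n<2T-1$, so in Case~5 that balancing is infeasible. Instead I would push the bias as far as it can go, setting $p=1$, so that on an edge in $A_j$ the Patroller always repeats the (odd) left endpoint and on an edge in $B_j$ always repeats the (odd) right endpoint, with the repetition time still uniformly random on $\mathcal{T}$.

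From Proposition~\ref{Discrete Line value u.b.} we already have $V\le 2/(n+1)$, so it suffices to exhibit a Patroller strategy achieving interception probability at least $2/(n+1)$ at every node. Using exactly the formulas~(\ref{eq:p2i}) and~(\ref{eq:p2i+1}) from the proof of Proposition~\ref{prop:case_4}, specialized to $p=1$, I obtain
\begin{equation*}
p_{2i-1} \;=\; \frac{2(T+p-1)}{(n+1)T}\bigg|_{p=1} \;=\; \frac{2}{n+1},
\qquad
p_{2i} \;=\; \frac{2(T-p)}{(n-1)T}\bigg|_{p=1} \;=\; \frac{2(T-1)}{(n-1)T}.
\end{equation*}
Thus odd-indexed nodes are intercepted with probability exactly $2/(n+1)$, and the only remaining task is to check that even-indexed nodes are intercepted with at least this probability. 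The inequality $2(T-1)/((n-1)T)\ge 2/(n+1)$ rearranges to $(T-1)(n+1)\ge (n-1)T$, i.e.\ $2T\ge n+1$, which is precisely the hypothesis $n\le 2T-1$ of Case~5. Combining with the upper bound yields $V=2/(n+1)$.

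There is essentially no hard step here: both formulas~(\ref{eq:p2i}) and~(\ref{eq:p2i+1}) are already derived in the proof of Proposition~\ref{prop:case_4}, the edge-selection scheme carries over verbatim, and the final check is a one-line algebraic manipulation. The only point worth stressing is the interpretive one, namely why the boundary case $p=1$ is the correct choice: since the independent attack strategy is the binding Attacker strategy in this regime (odd nodes form the maximum independent set and are the natural targets), the Patroller should concentrate as much time as possible on odd endpoints, and $p=1$ does exactly that while the condition $n\le 2T-1$ guarantees that this over-weighting of odd nodes does not leave even nodes under-defended.
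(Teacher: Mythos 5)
Your proposal is correct and follows essentially the same route as the paper: upper bound from Proposition~\ref{Discrete Line value u.b.}, then reuse the Case~4 mixture of biased oscillations with $p=1$ so that formulas~(\ref{eq:p2i+1}) and~(\ref{eq:p2i}) give $p_{2i-1}=2/(n+1)$ and $p_{2i}=2(T-1)/((n-1)T)\ge 2/(n+1)$, the latter being exactly the hypothesis $n\le 2T-1$. Your added remarks on why $p=1$ is the right boundary choice are a nice interpretive bonus but not needed.
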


\begin{proof}
	It follows from Proposition~\ref{Discrete Line value u.b.} that $V \le 2/(n+1)$. To prove the reverse bound on the value, we simply use the Patroller strategy described in the proof of Proposition~\ref{prop:case_4}, but this time taking $p=1$ in Equations~(\ref{eq:p2i+1}) and~(\ref{eq:p2i}) to obtain
	\[
	p_{2i-1} = \frac{2(T+p-1)}{(n+1)T} = \frac{2}{n+1} \mbox{ and } p_{2i} = \frac{2(T-1)}{(n-1)T} \ge \frac{2}{n+1},
	\]
where the last inequality follows directly from $n \leq 2T-1$. Thus, we have $V \geq 2/(n+1)$.
\end{proof}

\subsection{Decomposed strategies}
\label{sec:decomposed}

We may now also give an alternative optimal strategy for the Patroller in case 4, using a decomposition of the line graph.

\begin{proposition}
For $L_n$, if $T$ and $n$ are odd and $n>2T-1$ then $V=\frac{2T-1}{nT}$.
The uniform strategy is optimal for the Attacker. For the Patroller there is
an optimal strategy which decomposes the graph $Q=L_{n}$ into a left graph $%
\mathcal{L}$ $=L_{n_{\mathcal{L}}}$ with the odd number $n_{\mathcal{L}}=2T-1
$ of nodes $\left\{ 1,2,\dots ,2T-1\right\} $ and a right graph $\mathcal{R=}%
L_{n_{\mathcal{R}}}$with the remaining even number $n_{\mathcal{R}}=n-\left(
2T-1\right) $ of nodes $\left\{ 2T,2T+1,\dots ,n\right\} .$
\label{prop:case_4-decomp}
\end{proposition}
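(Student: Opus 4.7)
The plan is to apply the Patroller-decomposition inequality~(\ref{eq:decomp}) to the split described in the statement and verify that the resulting lower bound on $V$ matches the upper bound $(2T-1)/(nT)$ already provided by Proposition~\ref{Discrete Line value u.b.}. Everything reduces to a parity check together with one short algebraic identity.

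First I would check the parities. Since $n$ and $T$ are both odd, $n_\mathcal{L}=2T-1$ is odd and $n_\mathcal{R}=n-(2T-1)$ is even. Hence the sub-game on $\mathcal{L}$ sits exactly on the boundary $n=2T-1$ between Cases 4 and 5, and either Proposition~\ref{prop:case_4} or Proposition~\ref{prop:case_5} gives $V_\mathcal{L}=1/T$. The sub-game on $\mathcal{R}$ has odd $T$ and even $n_\mathcal{R}$, so it falls under the last part of Proposition~\ref{prop:cases1-3} (Case 3), giving $V_\mathcal{R}=(2T-1)/((n-2T+1)T)$. Optimal Patroller strategies on each piece are supplied explicitly by those propositions (a mixture of biased oscillations on $\mathcal{L}$, a biased covering strategy on $\mathcal{R}$), so the decomposed patrol is a well-defined mixed strategy.

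Next I would substitute these values into $V\ge V_\mathcal{L}V_\mathcal{R}/(V_\mathcal{L}+V_\mathcal{R})$. A direct calculation gives $V_\mathcal{L}+V_\mathcal{R}=n/((n-2T+1)T)$, after which the ratio collapses to $(2T-1)/(nT)$. Combining this lower bound with the upper bound from Proposition~\ref{Discrete Line value u.b.} (the uniform attack strategy, which applies because $n>2T-1$), I would conclude that $V=(2T-1)/(nT)$, that the decomposed patrol attains this value and is therefore optimal for the Patroller, and that the uniform strategy is optimal for the Attacker.

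There is no real obstacle here: the content of the proposition is the observation that the intricate $p$-biased construction of Proposition~\ref{prop:case_4} can, when the line is long enough, be replaced by a mixture of two optimal patrols on sub-graphs whose games we have already solved. The one point worth flagging is that $\mathcal{L}$ sits precisely on the Case~4/Case~5 boundary, and it is because both cases agree there (both yielding $V_\mathcal{L}=1/T$) that the decomposition arithmetic closes up exactly at $(2T-1)/(nT)$.
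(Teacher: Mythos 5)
Your proof is correct and follows essentially the same route as the paper: apply the decomposition inequality~(\ref{eq:decomp}) to $\mathcal{L}=L_{2T-1}$ (value $1/T$ via the boundary case of Proposition~\ref{prop:case_5}) and $\mathcal{R}=L_{n-2T+1}$ (value $(2T-1)/((n-2T+1)T)$ via Proposition~\ref{prop:cases1-3}), then match the resulting lower bound against the uniform-attack upper bound of Proposition~\ref{Discrete Line value u.b.}. Your observation that both Cases 4 and 5 agree on $\mathcal{L}$ at the boundary $n_{\mathcal{L}}=2T-1$ is a nice sanity check, though the paper simply invokes Case 5 there.
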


\begin{proof}
The adoption of the uniform attacker strategy guarantees that $V\geq
\frac{2T-1}{nT}$ by Proposition~\ref{prop:uniform_attacker}.  The left graph $\mathcal{L}$ satisfies the
hypotheses of Proposition~\ref{prop:case5_Patroller},  because $n_{\mathcal{L}} \leq 2T-1$ (equality
holds) and $T$ and $n_{\mathcal{L}}$ are odd. Hence Proposition~\ref{prop:case5_Patroller} gives%
\[
V\left( \mathcal{L}\right) =\frac{2}{n_{\mathcal{L}}+1}=\frac{2}{2T}.
\]%
The subgraph $\mathcal{R}$ has an even number of nodes $n_{\mathcal{R}},$ so
it satisfies the hypothesis of Proposition~\ref{prop:cases1-3}, hence equation (\ref{eq:Toddneven}) gives
\[
V\left( \mathcal{R}\right) =\frac{2T-1}{n_{\mathcal{R}}T}=\frac{2T-1}{\left(
n-\left( 2T-1\right) \right) T}.
\]%
It follows from the decomposition estimate (\ref{eq:decomp}) that
\[
V=V\left( L_{n}\right) \geq \frac{V\left( \mathcal{L}\right) ~V\left(
\mathcal{R}\right) }{V\left( \mathcal{L}\right) +V\left( \mathcal{R}\right) }%
=\frac{2T-1}{nT}.
\]
\end{proof}

As an example, consider again the case $T=3$ and $n=7>2T-1=5$, as considered in Section~\ref{sec:case4}. As we know, $V_{7}=5/21$. We decompose $L_{7}$ into
$\mathcal{L} = L_5$ and $\mathcal{R} = L_{2}$. On $L_{5},$ the optimal Patroller strategy is given by
Proposition \ref{prop:case_5}. On $L_{2}$ the optimal Patroller
strategy is an unbiased oscillation on the single edge $\left( 6,7\right) $.

According to Section~\ref{sec:decomp}, the probabilities $p_{5}$ and $%
p_{2}$ of patrolling on $L_{5}$ and $L_{2}$ should satisfy $p_{5}V_{5}=p_{2}V_{2}$. Since $V_{5}=2/(5+1)=1/3$ and $V_{2}=(2\cdot 3-1)/(2\cdot 3)=5/6$, we have $p_{5}=5/7$ and $%
p_{2}=2/7$.

We may represent this strategy by the diagram in Table~\ref{tab:L7-decomposed}, where $L_7$ is decomposed into $\mathcal{L} = L_5$ (on the left) and $\mathcal{R} = L_2$ (on the right). The Patroller first chooses $L_5$ with probability $p_5=5/7$ and $L_2$ with probability $2/7$. If he chooses $L_2$ then he performs an unbiased oscillation (indicated by the double-ended arrow) on edge $(6,7)$. If he chooses $L_5$ then he chooses one of the single-ended arrows at random and performs a left or right biased $p$-oscillation, depending on the direction of the arrow, with $p=1$.


\begin{table}[htb!]
	\begin{gather*}
	\begin{tabular}{llllllllll|lll}
	1 &  & 2 &  & 3 &  & 4 &  & 5 &  & 6 &  & 7 \\
	$\circ $ &  & $\bullet $ & $\Longrightarrow $ & $\bullet $ &  & $\bullet $ &
	$\Longrightarrow $ & $\bullet $ & & $\bullet $ & $\Longleftrightarrow $ & $%
	\bullet $ \\
	$\bullet $ & $\Longleftarrow $ & $\bullet $ &  & $\circ $ &  & $\bullet $ & $%
	\Longrightarrow $ & $\bullet $ & & $\bullet $ & $\Longleftrightarrow $ & $%
	\bullet $\\
	$\bullet $ & $\Longleftarrow $ & $\bullet $ &  & $\bullet $ & $%
	\Longleftarrow $ & $\bullet $ &  & $\circ $ & & $\bullet $ & $%
	\Longleftrightarrow $ & $\bullet $ \\
	\end{tabular}
	\end{gather*}%
	\caption{Decomposed strategy for $L_{7}$ with $T=3$.}
		\label{tab:L7-decomposed}
\end{table}

\subsection{Decomposable patrolling games}

We can now determine for which values of $T$ and $n$ the line graph $L_{n}$
is decomposable (equality in (5)), in the sense that the Patroller can
restrict his patrols to one of two disjoint subgraphs without loss of
optimality.

\begin{proposition}
The patrolling game on the line is decomposable unless $T$ and $n$ are odd
and $n\leq 2T-1$ (case 5).
\end{proposition}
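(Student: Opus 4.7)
The plan is to handle the two directions of the claim separately: exhibit an explicit two-piece decomposition achieving equality in (\ref{eq:decomp}) for cases 1--4, and show strict inequality in (\ref{eq:decomp}) for every disjoint decomposition in case 5. Case 4 is already handled by Proposition~\ref{prop:case_4-decomp}, so only cases 1, 2 and 3 (when at least one of $T,n$ is even) need a fresh construction.

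For cases 1--3 I would decompose $L_n$ into two consecutive intervals $L_{n_1}$ and $L_{n_2}$ with $n_1+n_2=n$, choosing the parities of $n_1$ and $n_2$ so that each piece's value from Proposition~\ref{prop:cases1-3} combines to the value of $L_n$. In case 1 ($T,n$ even) take both $n_i$ even, so each piece is again case 1 with $V_i=2/n_i$, and a short algebraic check yields $V_1V_2/(V_1+V_2)=2/n$. In case 2 ($T$ even, $n$ odd) take $n_1$ even and $n_2$ odd, giving $V_1=2/n_1$ and $V_2=2/(n_2+1)$ with product formula $2/(n+1)$. In case 3 ($T$ odd, $n$ even) take both $n_i$ even so that $V_i=(2T-1)/(n_iT)$, and the formula gives $(2T-1)/(nT)$. (For the very smallest size $n=2$, no interval split into two pieces of size $\ge 2$ exists; this boundary case would need a separate remark.)

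For case 5 I would show strict inequality in (\ref{eq:decomp}) for every disjoint partition $N=N_1\sqcup N_2$ into non-empty pieces. The first step is an edge-monotonicity reduction: since adding edges can only enlarge the Patroller's strategy set, $V(Q_i)\le V(L_{|N_i|})$, and since $f(x,y)=xy/(x+y)$ is coordinatewise increasing, the general decomposition bound is dominated by the corresponding bound for the interval decomposition $L_{n_1}\sqcup L_{n_2}$ with $n_i=|N_i|$. So it suffices to rule out interval decompositions. Since $n$ is odd, exactly one of $n_1,n_2$ is odd; WLOG $n_1$ is odd. Then $n_1\le n\le 2T-1$ places the odd piece in case 5, giving $V_1=2/(n_1+1)$ (consistent with $V(L_1)=1$ at the boundary $n_1=1$), while the even piece falls in case 3 giving $V_2=(2T-1)/(n_2T)$. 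Substituting into $V_1V_2/(V_1+V_2)<2/(n+1)$ and cross-multiplying, the desired inequality collapses cleanly to $(2T-1)n_2<2Tn_2$, i.e.\ $2T-1<2T$, which is trivially true.

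The main obstacle is the monotonicity reduction for non-interval partitions, where $Q_i$ is a disjoint union of shorter paths whose value is not among the formulas already established in Proposition~\ref{prop:cases1-3} or Proposition~\ref{prop:case_5}. The bound $V(Q_i)\le V(L_{|N_i|})$, obtained by re-adding edges between consecutive fragments to merge them into a single path, is exactly what allows one to bypass any direct computation on disconnected subgraphs and reduce everything to the clean algebraic identity above.
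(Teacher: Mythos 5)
Your proposal is correct and follows essentially the same route as the paper: exhibit a parity-matched two-interval decomposition achieving equality in (\ref{eq:decomp}) for cases 1--3, defer case 4 to Proposition~\ref{prop:case_4-decomp}, and for case 5 plug the known values of the odd piece (case 5 again, since $n_1\le n\le 2T-1$) and the even piece (case 3) into the decomposition bound and verify strict inequality --- your reduction to $(2T-1)n_2<2Tn_2$ is the same computation the paper carries out explicitly as a positive difference $4j/\bigl((n+1)(2j+(2T-1)(1+n))\bigr)>0$. Two of your refinements actually go beyond the paper's write-up and are worth keeping: the edge-monotonicity argument ($V(Q_i)\le V(L_{|N_i|})$ together with the coordinatewise monotonicity of $xy/(x+y)$) disposes of non-interval partitions, which the paper silently ignores when it asserts that ``any decomposition of $L_n$ is into $L_{2j}$ and $L_{n-2j}$''; and your flag on $n=2$ is a genuine boundary issue, since the only disjoint split of $L_2$ is into two singletons, yielding $1/2<V(L_2)$, so the proposition as stated needs $n\ge 3$ in cases 1 and 3 (the paper handles $n=3$, $T$ even separately but says nothing about $n=2$).
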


\begin{proof}
First we show that for cases 1through 4 in Table 1, the patrolling game is
decomposable (by the Patroller). In cases 1, 2 and 3, the Patroller uses
what we call covering strategies, in that his pure patrols are on edges
forming a minimum covering set. For $n\geq 4,$ such as set can include the
edge $\left( 1,2\right) $ and $\left( 3,4\right) $ and in particular the
Patroller can avoid using the edge $\left( 2,3\right) .$ It follows that he
is decomposing $L_{n}$ into $\mathcal{L}=L_{2}$ and $\mathcal{R}=L_{n-2}$
with disjoint nodes sets $\left\{ 1,2\right\} $ and $\left\{ 3,\dots
,n\right\} .$ (If $T$ is even and $n=3,$ then instead of using the covering
strategy involving edges $\left( 1,2\right) $ and $\left( 2,3\right) ,$ the
Patroller decomposes the game by equiprobably oscillating on edge $\left(
1,2\right) $ and remaining stationary on node $3$ to obtain an interception
probability of $1/2=V\left( L_{3}\right) .)$ For case 4, the  optimal
Patroller strategy given in Proposition 9 does not decompose the game.
However an optimal strategy which does decompose the game is given in
Proposition 10, where $L_{n},$ $n$ odd, is decomposed into  $\mathcal{L}%
=L_{2T-1}$ and $\mathcal{R}=L_{n-\left( 2T-1\right) }.$ This is a strategy
where the Patroller never traverses the edge $\left( 2T-1,2T\right) .$

So assume that $T$ and $n$ are odd and $n\leq 2T-1$ (case 5). So any
decomposition of $L_{n}$ is into an even node line graph $L_{2j},$ $j>0$ and
an odd one $L_{n-2j}.$ The assumptions on $T$ and $n$ are covered by
Proposition $9,$ so we have%
\[
V\left( L_{n}\right) =\frac{2}{n+1}\text{.}
\]%
Since $2j$ is even, it follows from (8) in Proposition 5, that
\[
V\left( L_{2j}\right) =\frac{2T-1}{2jT}.
\]%
Since $n-2j$ is odd and $n-2j<n\leq 2T-1,$ it follows from Proposition 9 that%
\[
V\left( L_{n-2j}\right) =\frac{2}{\left( n-2j\right) +1}\text{.}
\]%
The best the Patroller can do by such a decomposition (see Section 3.2) is
to obtain an interception probability of%
\[
\frac{V\left( L_{2j}\right) \ast V\left( L_{n-2j}\right) }{V\left(
L_{2j}\right) +V\left( L_{n-2j}\right) }.
\]
The difference between the unrestricted value and the restricted one is
given above is%
\begin{eqnarray*}
V\left( L_{n}\right) -\frac{V\left( L_{2j}\right) \ast V\left(
L_{n-2j}\right) }{V\left( L_{2j}\right) +V\left( L_{n-2j}\right) } &=&\frac{2%
}{n+1}-\frac{\left( \frac{2T-1}{2jT}\right) \ast \left( \frac{2}{%
\left( n-2j\right) +1}\right) }{\left( \frac{2T-1}{2jT}\right)
+\left( \frac{2}{\left( n-2j\right) +1}\right) } \\
&=&\frac{4j}{\left( n+1\right) \left( 2j+\left( 2T-1\right) \left(
1+n\right) \right) }>0.
\end{eqnarray*}
\end{proof}

%
%
%
%
%
%
%

\subsection{Limiting values for large periods $T$}
\label{sec:limit_on_T}

Compared with games with simply a fixed time horizon $T,$ the problem with
\textit{period} $T$ is more difficult for the Patroller, as he has the
additional requirement that he has to end at the same node as he started.
However as the period gets large, this restriction is less oppressive to the
Patroller, because the amount of time he must use to get back to his start
is the fixed diameter of the graph. In this subsection we check that the
limiting value of $V\left( T,n\right) $ for the game with period $T$
approaches the value $V\left( n\right) $ found for the patrolling game on
the graph $L_{n}$ without periodic patrols. For $m=2$ the values found in
Papadaki et al. (2016) are simply $V\left( n\right) =1/\left\lceil
n/2\right\rceil ,$ that is, $2/n$ for even $n$ and $2/\left( n+1\right) $
for odd $n.$ If we look at the values $V\left( T,n\right) $ for periodic
patrols found for the five cases, looking back at Table~\ref{table:summary of results}, for cases 1, 2, 3
and 5 (case 4 does not hold as $T$ goes to infinity), we obtain the same
limiting value%
\[
\lim_{T\rightarrow \infty }V\left( T,n\right) =V\left( n\right)
=1/\left\lceil n/2\right\rceil ,\text{ for all }n.
\]

Of course this is not an easy way of establishing the nonperiodic result, as
the periodic case dealt with here is more complicated.

\subsection{Further connections with non-periodic game}
\label{sec:non-periodic}

The solution of the non-periodic game on $L_n$ as given in Papadaki el al (2016) involves periodic patrols of different periods $T_1,\ldots,T_k$. Setting $T^*$ to be the least common multiple of $\{T_1,T_2,\ldots, T_k\},$ we see that the solution has period $T^*$. If we were seeking a solution to the periodic game with set period $T^*$ the same solution would be valid.

Let us consider the example with $n=7,$ $m=2$ (in the non-periodic game there is no given $T$). The solution given there is as follows: with probability $1/8$ adopt unbiased oscillations on edges $(1,2)$ and $(6,7)$ and with probability $6/8$ adopt a tour of $L_n$ of period $2(n-1)=12$ that goes back and forth between the end nodes. This is illustrated in Figure \ref{fig:example_nonperiodic}.

\begin{figure}[H]
	\begin{center}
		\includegraphics[scale=0.3]{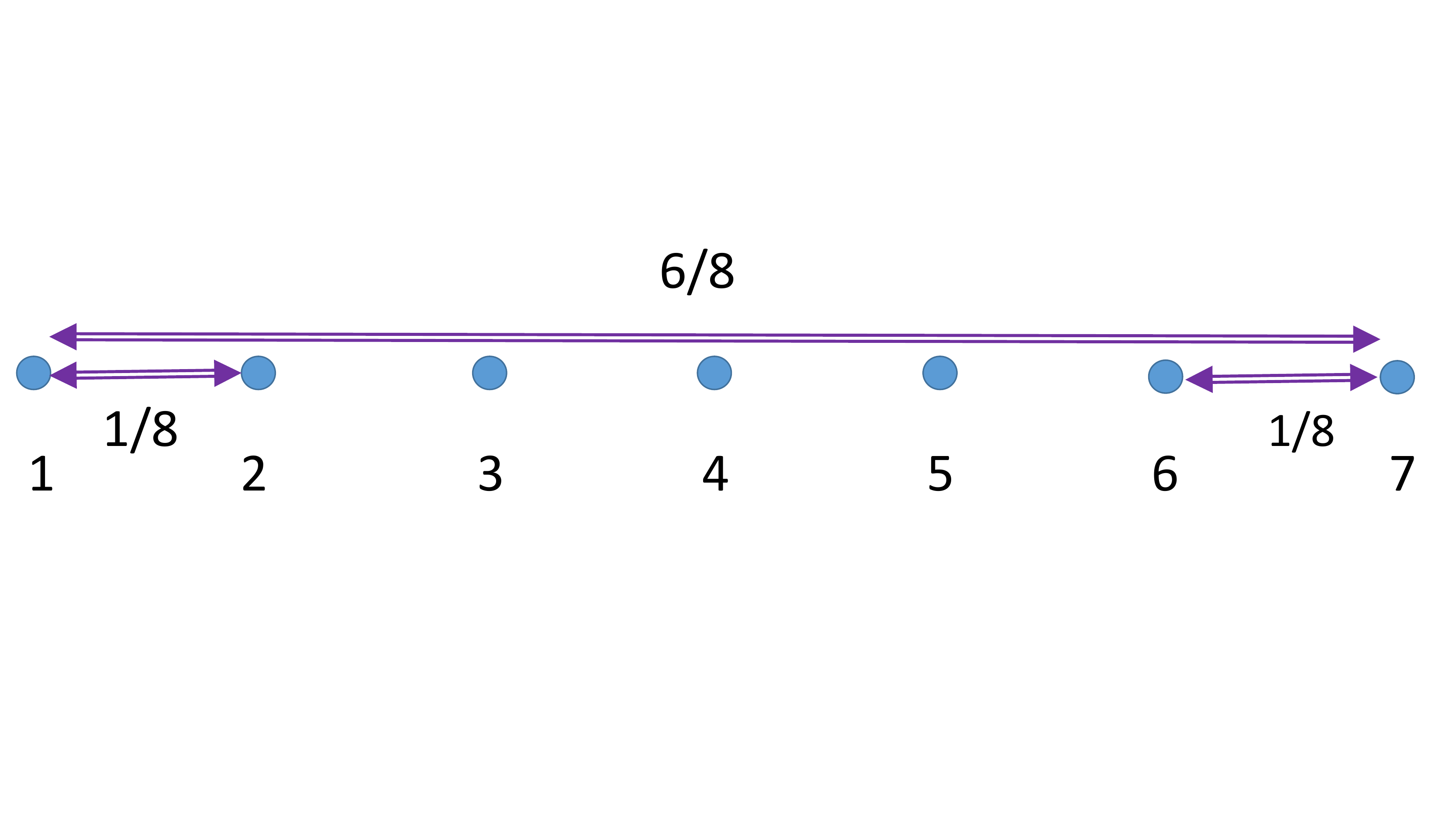}
		\caption{Patroller oscillates between end nodes with probability $6/8$ and on edges $(1,2)$ and $(6,7)$ each with probability $1/8$ in $L_7$.  }
		\label{fig:example_nonperiodic}
	\end{center}
\end{figure}

It is easy to check that the probability that the tour of $L_{n}$ (of period
$12$) intercepts attacks at nodes $1,2,\dots ,7$ is given respectively by $%
2/12,3/12,4/12,4/12,4/12,3/12,2/12$. The 12-cycle can be written, starting at
say node 3, as $3^{\ast },4^{\ast },5^{\ast },6^{\ast
},7,6,5,4,3,2,1,2^{\ast },\dots$, where $^{\ast }$ indicates going to the
right. Note that an attack at node 4 starting at time $t$ will be
intercepted if the Patroller following this cycle is at one of the four
steps $5,4$ or $3^{\ast },4^{\ast }$ out of the twelve steps in the cycle,
that is, with probability $4/12.$ The other probabilities are calculated in
a similar manner. For example the Patroller can be at steps $2^{\ast }$ or $1
$ to intercept an attack at node $1$ and at steps $2,1$ or $2^{\ast }$ to
intercept an attack at node $2$.

We now calculate the probability that the mixed strategy stated above intercepts an attack at each node. For node $1$ such an attack is intercepted with probability $2/12$ by the big oscillation and with probability $1$ by the oscillation on edge $(1,2)$. Hence, the total interception probability is given by $(6/8)(2/12)+(1/8)(1)=1/4$. An attack in node $2$ is intercepted with probability $3/12$ by the big oscillation and with probability $1$ by the oscillation on edge $(1,2)$. Hence, the total interception probability is given by $(6/8)(3/12)+(1/8)(1)=5/16$. At node $3$ an attack is intercepted with probability $4/12$ by the big oscillation. Hence the total interception probability is given by $(6/8)(4/12)=1/4$. The argument for node $4$ is the same as node $3$ and the interception probabilities for nodes $5,6,7$ are the same as nodes $3,2,1$ respectively by symmetry. So the overall interception probabilities for nodes $\{1,2,...,7\}$ are given by $\{1/4,5/16,1/4,1/4,1/4,5/16,1/4\}$. The minimum is $1/4$, which is also the value of $m/(n+m-1) =  1/4$, given by Papadaki el al (2016). Note that the Attacker can achieve a successful attack with probability $1/4$ by attacking equiprobably simultaneously at the nodes of the independent set $\{1,3,5,7\}$.

To compare the above analysis with the periodic game of this paper, observe that the three oscillations used in the optimal mixed strategy above have periods $T_1=T_2=2$ and $T_3 = 12$, with least common multiple of $T^*=12$. So this also gives a solution to the periodic game with $n=7$ and $T=T^*=12$. Since $T^*$ is even and $n$ is odd our formula given in Proposition~\ref{prop:cases1-3}, case 2, is $2/(n+1) = 1/4$. The two analyses agree on the value. Note however, that the patrolling strategy given above differs from that given by our analysis of the periodic game with $T=12$ and $n=7$ given in Section~\ref{sec:general_results}, Figure \ref{fig:example_periodic}. Note also that for both patrolling strategies the nodes which are unfavourable to attack are the penultimate nodes $2$ and $6$. This shows that the Patroller strategies that we give in our analysis are not uniquely optimal. While this gives an alternative method of analyzing the periodic game $T=12$, $n=7$, it does not solve it in general. For example it would not solve the game for, say, $T=11$.

\section{Multiple Patrollers}
\label{sec:multiple}

We now consider a generalization of the game, where there are $k$ Patrollers. The Attacker's strategy set is the same, but his opponent chooses $k$ periodic walks on $L_n$, corresponding to $k$ patrols. The attack is intercepted and the payoff is $1$ if any of the Patrollers intercept the attack.

Let $V^{(k)}$ denote the value of the game when there are $k$ Patrollers,
and write $V^{(k)}_n$ for the value of the $k$ Patroller game on $L_n$.
Suppose in the single Patroller game the Patroller plays first as in the $k$
game but then picks a Patroller randomly. Thus he wins with probability at
least $V^{(k)}/k,$ and hence%
\begin{align}
V^{(k)} &\leq kV.  \label{eq:multiple}
\end{align}
That is, $k$ Patrollers can intercept an attack with probability at most
$k$ times the probability that a single Patroller can intercept an attack.

The estimate holds with equality if and only if the $k$ Patrollers can
jointly attack in such a way that each one is following an optimal strategy
for $k=1$ and furthermore no possible attack is simultaneously intercepted
by more than one of the Patrollers.

If we assume $k \le n/2$ then it is easy to adapt our optimal strategies described in the sections above for $k=1$ to the more general game where $k > 1$. As an example, take case 4, with $n=7$, $T=3$ and $k=3$. An optimal Patroller strategy for $k=1$ is depicted in Table~\ref{tab:L7-case4}: recall that the Patroller chooses one of the $12$ arrows at random and performs a left or right $p$-biased oscillation, depending on the direction of the arrow, where $p=6/7$.

An optimal strategy for $k=3$ simply chooses a row at random and assigns one of the $3$ Patrollers to each arrow. This clearly implies that $V_n^{(k)} = 3 V_n$. For $k=2$ the Patroller chooses a row at random and randomly assigns the $2$ Patrollers to $2$ of the $3$ arrows. Note that this extension to $k >1 $ Patrollers works for any $k \le 3$ but not for $k > 3$. For example this particular argument does not work for $k=4$. Note also that the alternative decomposed strategy for case 4, described in Section~\ref{sec:decomposed} cannot be extended to $k >1$ Patrollers in the same way.

Similarly, for the other cases, as long as $k \le n/2$, the Patroller's strategy for $k=1$ can be extended to $k >1$. We omit the details, as the extensions are straightforward. Hence we have the following theorem.

\begin{theorem}
	For $k \le n/2$, the value $V_n^{(k)}$ of the $k$ Patroller game on the line graph $L_n$ satisfies $V_n^{(k)} = k V_n$.
\end{theorem}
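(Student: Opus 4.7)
The plan is to establish the two directions of the equality separately. The upper bound $V_n^{(k)}\leq kV_n$ is immediate from~(\ref{eq:multiple}). For the matching lower bound I will construct, in each of the five cases of Table~\ref{table:summary of results}, an explicit joint $k$-Patroller strategy whose marginal on each individual patroller is an optimal single-patroller strategy and in which no pure Attacker strategy can be intercepted by more than one patroller. By the equality condition noted in the paragraph following~(\ref{eq:multiple}) this immediately gives $V_n^{(k)}\geq kV_n$.

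The unifying observation is that in every case there is an optimal single-patroller strategy of the form: after an outer randomisation the patroller chooses one edge uniformly from a set $\mathcal{E}$ of $\lfloor n/2\rfloor$ pairwise disjoint edges and performs a (possibly $p$-biased) oscillation on it. For $n$ even (cases 1 and 3) the outer randomisation is trivial and $\mathcal{E}=\{(1,2),(3,4),\ldots,(n-1,n)\}$ is the minimum covering from Proposition~\ref{prop:cases1-3}; the oscillation is unbiased in case 1 and $1/2$-biased in case 3. For $n$ odd (cases 2, 4 and 5) the outer randomisation is the uniform choice of a row $j\in\{1,\ldots,(n+1)/2\}$ from the construction of Proposition~\ref{prop:case_4}, and $\mathcal{E}=D_j$ is the set of $(n-1)/2$ pairwise disjoint edges associated with row $j$, with the $p$-bias taken from Proposition~\ref{prop:case_4} or Proposition~\ref{prop:case_5} (in case 2, where $T$ is even, any $p$ suffices since an unbiased oscillation intercepts both endpoints deterministically).

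Given $k\leq n/2$, hence $k\leq\lfloor n/2\rfloor=|\mathcal{E}|$, the natural $k$-Patroller strategy is: perform the outer randomisation once (sampling a common row in the odd-$n$ cases), then draw a uniformly random subset of $k$ edges of $\mathcal{E}$ and assign them bijectively to the patrollers via a uniform random assignment, each patroller then executing the prescribed oscillation on her own edge with independent internal randomisation. Since ``pick $k$ without replacement, then pick one uniformly'' has the same marginal as ``pick one uniformly'', the marginal law of each individual patroller's walk coincides with the single-patroller optimum, so her individual interception probability at any node equals $V_n$.

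The genuine obstacle is to verify that no attack is intercepted by two patrollers in any realization. This reduces to the assertion that in every outcome of the outer randomisation the $k$ edges actually assigned are pairwise disjoint, which is obvious in the even-$n$ cases and follows in the odd-$n$ cases from the fact that, for each fixed $j$, the edges of $A_j$ and $B_j$ lie respectively inside the disjoint node sets $\{1,\ldots,2j-2\}$ and $\{2j,\ldots,n\}$, so the edges of $D_j$ are pairwise disjoint, exactly as shown in Table~\ref{tab:L7-case4}. Combining the marginal identity with this no-overlap property yields $V_n^{(k)}\geq kV_n$ and, together with~(\ref{eq:multiple}), the desired equality.
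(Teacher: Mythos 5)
Your proof is correct and follows essentially the same route as the paper: the upper bound from~(\ref{eq:multiple}) combined with a joint strategy whose marginals are optimal single-patroller strategies supported on pairwise disjoint oscillation edges, so that interception events are disjoint and probabilities add. The paper only sketches this via the $n=7$, $T=3$ example and ``omits the details''; you supply them, including the one non-obvious point that in case 2 the minimum covering set is not disjoint and must be replaced by the $D_j$-based strategy (and note only that each patroller's interception probability is \emph{at least}, rather than exactly, $V_n$ at every node, which is all the argument needs).
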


It is natural to question whether, for $k > n/2$, the value of the game is $\min\{k V_n,1\}$. Indeed, for $T$ even, it is easy to see that this is true, since for $k>n/2$, the Patroller can win the game with probability $1$ by oscillating on $k$ covering edges.

But for $T$ odd, it is not true. Consider the same example of $n=7$ and $T=3$ but this time with $k=4$ Patrollers. In
this case, the bound~(\ref{eq:multiple}) gives $V_{7}^{(4)}\leq 4V_{7}=20/21$.
Suppose the Attacker employs the uniform strategy. Since $T=3$ it is clear that each of
the $4$ Patrollers must either choose an edge and perform a biased
oscillation on that edge, or stay at a single node. Each Patroller can
only guarantee certain interception at only one node. It follows that there
are at most $4$ nodes at which any attack is intercepted with probability $1$%
, and the maximum probability an attack is intercepted at the remaining $3$
nodes is $2/3$. Hence the maximum probability of interception is $%
4/7 \cdot(1)+2/3\cdot(3/7)=6/7 < 20/21$, so $V_{7}^{(4)}\leq 6/7$ and~(\ref{eq:multiple}) is not
tight.

To see that the value $V_{7}^{(4)}$ is in fact exactly equal to $6/7$,
consider the strategy of the Patrollers as depicted in Table~\ref{tab:k=4}. This time
the circle with the dot in the middle indicates that a Patroller remains at
this node, whereas an arrow, as before, denotes performing a left or right $p$-biased oscillation, depending on the direction of the arrow, taking $p=4/7$. The Patrollers choose one of the four rows at random, then they are each assigned to one of the edges corresponding to an arrow or to the node corresponding to the circle with a dot in it.

All even numbered nodes have four arrows coming into them and using (\ref{eq:forward}) the
probability an attack there is intercepted is:
\begin{equation*}
p \cdot (1) + (1-p) \cdot \frac{T-1}{T} = (4/7) \cdot (1) + (3/7) \cdot (2/3) = 6/7.
\end{equation*}
Similarly, attacks at odd numbered nodes have a probability of $1/4$ that there is a stationary Patroller at that node who definitely intercepts the attack, and $3/4$ probability that there is a Patroller using an arrow going away from that node. Hence attacks at odd numbered nodes are intercepted with probability:
\begin{equation*}
1/4 (1) + 3/4 \left(\frac{T-p}{T} \right) = 6/7 \text{, by (\ref{eq:backward}).}
\end{equation*}

\begin{table}[htb!]
\begin{gather*}
\begin{tabular}{lllllllllllll}
1 &  & 2 &  & 3 &  & 4 &  & 5 &  & 6 &  & 7 \\
$\odot $ &  & $\bullet $ & $\Longleftarrow $ & $\bullet $ &  & $\bullet $ & $%
\Longleftarrow $ & $\bullet $ &  & $\bullet $ & $\Longleftarrow $ & $\bullet
$ \\
$\bullet $ & $\Longrightarrow $ & $\bullet $ &  & $\odot $ &  & $\bullet $ &
$\Longleftarrow $ & $\bullet $ &  & $\bullet $ & $\Longleftarrow $ & $\bullet $
\\
$\bullet $ & $\Longrightarrow $ & $\bullet $ &  & $\bullet $ & $%
\Longrightarrow $ & $\bullet $ &  & $\odot $ &  & $\bullet $ & $%
\Longleftarrow $ & $\bullet $ \\
$\bullet $ & $\Longrightarrow $ & $\bullet $ &  & $\bullet $ & $%
\Longrightarrow $ & $\bullet $ &  & $\bullet $ & $\Longrightarrow $ & $%
\bullet $ &  & $\odot $
\end{tabular}
\end{gather*}
	\caption{Optimal strategy for $L_{7}$ with $T=3$ and $k=4$.}
		\label{tab:k=4}
\end{table}

It is not hard to show that for $T=3$ and $n=7$, even for $k=6$ the value of the game is strictly less than $1$. In fact it is equal to $20/21$ in this case (we omit the details).

\section{Conclusions}
\label{sec:conclusions}

This paper has begun the study of periodic patrols on the line, by giving a
complete solution to the case of short attack duration $m=2$. One reason
that the case $m=2$ is susceptible to our analysis is that, at least for
even $T$, the covering number can be identified with the minimum number of
patrols that are required to intercept any attack. This is not true for
large $m$. The periodic patrolling game is much more difficult to solve than
the unrestricted version of the game (where patrols are not required to have
a given period). The latter can be solved for line graphs of arbitrary size
and arbitrary attack duration, as long as the time horizon is sufficiently
large, as shown in Papadaki et al. (2016).


\begin{thebibliography}{99}
\bibitem{} Alpern S, Morton A, Papadaki K (2011) Patrolling games.
\textit{Oper. Res.} 59(5):1246--1257.

\bibitem{alpern} Alpern S (1992) Infiltration games on arbitrary graphs.
{\it J. Math. Anal. Appl.} 163(1):286--288.

\bibitem{} Basilico N, Gatti N, Amigoni F (2009) A formal
framework for mobile robot patrolling in arbitrary environments with
adversaries. arXiv preprint arXiv:0912.3275.

\bibitem{} Basilico N, Gatti N, Amigoni F (2012) Patrolling security games:
Definition and algorithms for solving large instances with single Patroller
and single intruder. \textit{Artif. Intell.} 184:78--123.

\bibitem{} Baston VJ, Bostock FA (1987) A continuous game of ambush.
\textit{Nav. Res. Log.} 34(5):645--654.

\bibitem{baston-garnaev} Baston VJ, Garnaev AY (1996) A fast
infiltration game on n arcs. {\it Nav. Res. Log.} 43(4):481--490.

\bibitem{} Baston V, Kikuta K (2004) An ambush game with an unknown number
of infiltrators. \textit{Oper. Res.} 52(4):597--605.

\bibitem{} Baston V, Kikuta K (2009). Technical Note - An Ambush Game with a
Fat Infiltrator. \textit{Oper. Res.} 57(2):514-519.

\bibitem{} Baykal-G\"{u}rsoy M, Duan Z, Poor HV, Garnaev A
(2014). Infrastructure security games. {\it Eur. J. Oper. Res.} 239(2):469--478.

\bibitem{} Chung H, Polak E, Royset JO, Sastry S (2011) On the
optimal detection of an underwater intruder in a channel using unmanned
underwater vehicles. {\it Nav. Res. Log.} 58(8):804--820.

\bibitem{} Collins A, Czyzowicz J, Gasieniec L, Kosowski A, Kranakis E,
Krizanc D, Morales Ponce O (2013) Optimal patrolling of fragmented
boundaries. In {\it Proceedings of the twenty-fifth annual ACM symposium on
Parallelism in algorithms and architectures}, 241--250. ACM.

\bibitem{} Fokkink R, Lindelauf R. (2013) The Application of Search Games
to Counter Terrorism Studies. In {\it Handbook of Computational Approaches to
Counterterrorism} 543--557, Springer New York.

\bibitem{} Gal S (1979) Search games with mobile and immobile hider.
\textit{SIAM J. Control. Optim.} 17: 99-122.

\bibitem{} Gal S (2000) On the optimality of a simple strategy for
searching graphs. \textit{Int. J. Game Theory} 6(29):533--542.

\bibitem{} Garnaev A, Garnaeva G, Goutal P (1997) On the
infiltration game. {\it Int. J. of Game Theory} 26(2):15--221.

\bibitem{} Hochbaum DS, Lyu C, Ord\'{o}\~{n}ez F (2014) Security
routing games with multivehicle Chinese postman problem. \textit{Networks}
64(3):181--191.

\bibitem{} Lin KY, Atkinson MP, Chung TH, Glazebrook KD (2013) A graph
patrol problem with random attack times. \textit{Oper .Res.} 61(3):694--710.

\bibitem{} Lin KY, Atkinson MP, Glazebrook, KD (2014) Optimal
patrol to uncover threats in time when detection is imperfect. \textit{Nav.
Res. Log.} 61(8):557--576.

\bibitem{} Morse PM, Kimball GE (1951) {\it Methods of Operations Research}, MIT
Press and Wiley, New York.

\bibitem {} Papadaki K, Alpern S, Lidbetter T, Morton A (2016) Patrolling a Border. {\it Oper. Res.} 64(6):1256--1269.

\bibitem{} Ruckle W (1983) {\it Geometric Games and Their Applications}, Pitman,
Boston.


\bibitem{} Szechtman R, Kress M, Lin K, Cfir D (2008) Models of sensor
operations for border surveillance. \textit{Nav. Res. Log.} 55(1):27--41.

\bibitem{} Washburn AR (1982) On patrolling a channel. \textit{Nav. Res.
Logist. Q.} 29(4):609--615.

\bibitem{} Washburn A (2010). Barrier games. \textit{Mil. Oper. Res.}
15(3):31--41.

\bibitem{} Zoroa N, Fern\'{a}ndez-S\'{a}ez MJ, Zoroa P (2012)
Patrolling a perimeter. {\it Eur. J. Oper. Res.} 222(3):571--582.
\end{thebibliography}
\end{document}